\begin{document}

\title{\parbox{\linewidth}{\footnotesize\noindent } Prolongation of Poisson $%
2$-form on Weil bundles}
\author{Norbert\ MAHOUNGOU\ MOUKALA\thanks{{\footnotesize nmahomouk@yahoo.fr}%
}, Basile Guy Richard BOSSOTO\thanks{{\footnotesize bossotob@yahoo.fr}} ~}
\date{}
\maketitle

\begin{abstract}
In this paper, $M$\ denotes a smooth manifold of dimension $n$, $A$ a Weil
algebra and $M^{A}$ the associated Weil bundle. When $(M,\omega _{M})$ is a
Poisson manifold with $2$-form $\omega _{M}$, we construct the $2$-Poisson
form $\omega _{M^{A}}^{A}$, prolongation on $M^{A}$ of the $2$-Poisson form $%
\omega _{M}$. We give a necessary and sufficent condition for that $M^{A}$
be an $A$-Poisson manifold.
\end{abstract}

\bigskip Mathematics Subject Classification : 58A20, 58A32, 17D63, 53D17,
53D05.

Key words: Weil bundle, Weil algebra, Poisson manifold, Lie derivative,
Poisson 2-form.

\section{Introduction}

\subsection{Weil algebra and Weil bundle}

In what follows, all structures are assumed to be of class $C^{\infty }$. We
denote by $M$ a smooth differential manifold, $C^{\infty }(M)$ the algebra
of differentiable functions on $M$ and by $\mathfrak{X}(M)$, the $C^{\infty
}(M)$-module of vectors field on $M$.

A Weil algebra is a real, unitary, commutative algebra of finite dimension
with a unique maximal ideal of codimension $1$ on $\mathbb{R}$ \cite{wei}.%
\newline
Let $A$ be a Weil algebra and $\mathfrak{m}$ be its maximal ideal. We have $%
A=\mathbb{R}\oplus \mathfrak{m}$ and the first projection 
\begin{equation*}
A=\mathbb{R}\oplus \mathfrak{m}\longrightarrow \mathbb{R}
\end{equation*}%
is a homomorphism of algebras which is surjective, called augmentation and
the unique non zero integer $h\in \mathbb{N}$ such that $\mathfrak{m}%
^{h}\neq (0)$ and $\mathfrak{m}^{h+1}=(0)$ is the height of $A$ \cite{wei}.%
\newline

If $M$ is a smooth manifold, and $A$ a Weil algebra of maximal ideal $%
\mathfrak{m}$, an infinitely near point to $x\in M$ of kind $A$ is a
homomorphism of algebras 
\begin{equation*}
\xi :C^{\infty }(M)\longrightarrow A
\end{equation*}%
such that $[\xi (f)-f(x)]\in \mathfrak{m}$ for any $f\in C^{\infty }(M)$.
i.e the real part of $\xi (f)$\ is exactly $f(x)$ \cite{wei}.

We denote by $M_{x}^{A}$ the set of all infinitely near points to $x\in M$
of kind $A$ and $M^{A}=\bigcup\limits_{x\in M}M_{x}^{A}$ the manifold of
infinitely near points of kind $A$. We have $\dim M^{A}=\dim M\times \dim A$%
\cite{mor}.\newline

When both $M$ and $N$ are smooth manifolds and when $\ h:M\longrightarrow N$
is a differentiable application, then the map%
\begin{equation}
h^{A}:M^{A}\longrightarrow N^{A},\xi \longmapsto h^{A}(\xi ),  \notag
\end{equation}%
such that, for any $g\in C^{\infty }(N)$, 
\begin{equation}
\left[ h^{A}(\xi )\right] (g)=\xi (g\circ h)  \notag
\end{equation}%
is also differentiable. When $h$ is a diff{e}omorphism, it is the same for $%
h^{A}$ \cite{bok2}.\newline

Moreover, if $\varphi :A\longrightarrow B$ is a homomorphism of Weil
algebras, for any smooth manifold $M$, the map 
\begin{equation}
\varphi _{M}:M^{A}\longrightarrow M^{B},\xi \longmapsto \varphi \circ \xi 
\notag
\end{equation}%
is differentiable. In particular, the augmentation%
\begin{equation}
A\longrightarrow \mathbb{R}  \notag
\end{equation}%
defines for any smooth manifold $M$, the projection 
\begin{equation}
\pi _{M}:M^{A}\longrightarrow M,  \notag
\end{equation}%
which assigns every infinitely near point to $x\in M$ to its origin $x$.
Thus ($M^{A},\pi _{M},M$) defines the bundle of infinitely near points or
simply Weil bundle \cite{kms},\cite{mor},\cite{wei},\cite{oka1}.

If $(U,\varphi )$ is a local chart of $M$ with coordinate functions $%
(x_{1},x_{2},...,x_{n})$, the application 
\begin{equation}
U^{A}\longrightarrow A^{n},\xi \longmapsto (\xi (x_{1}),\xi (x_{2}),...,\xi
(x_{n})),  \notag
\end{equation}%
is a bijection from $U^{A}$ into an open of $A^{n}$. The manifold $M^{A}$ is
a smooth manifold modeled over $A^{n}$, that is to say an $A$-manifold of
dimension $n$ \cite{bok1},\cite{shu}.\newline

The set, $C^{\infty }(M^{A},A)$ of differentiable functions on $M^{A}$ with
values in $A$ is a commutative, unitary algebra over $A$. When one
identitifies $\mathbb{R}^{A}$ with $A$, for $f\in C^{\infty }(M)$, the map%
\begin{equation}
f^{A}:M^{A}\longrightarrow A,\xi \longmapsto \xi (f)  \notag
\end{equation}%
is differentiable and the map%
\begin{equation}
C^{\infty }(M)\longrightarrow C^{\infty }(M^{A},A),f\longmapsto f^{A}\text{,}
\notag
\end{equation}%
is an injective homomorphism of algebras and we have:%
\begin{equation}
(f+g)^{A}=f^{A}+g^{A};(\lambda \cdot f)^{A}=\lambda \cdot f^{A};(f\cdot
g)^{A}=f^{A}\cdot g^{A}  \notag
\end{equation}%
for $\lambda \in \mathbb{R},$ $f,g\in C^{\infty }(M)$.\newline

We denote $\mathfrak{X}(M^{A})$, the set of all vector fields on $M^{A}.$
According to \cite{bok1}, \cite{nbo} We have the following equivalent
assertions:

\begin{theorem}
The following assertions are equivalent:

\begin{enumerate}
\item A vector field on $M^{A}$ is a differentiable section of the tangent
bundle $(TM^{A},\pi _{M^{A}},M^{A})$.

\item A vector field on $M^{A}$ is a derivation of $C^{\infty }(M^{A})$.

\item A vector field on $M^{A}$ is a derivation of $C^{\infty }(M^{A},A)$
which is $A$-linear.

\item A vector field on $M^{A}$ is a linear map $X:C^{\infty
}(M)\longrightarrow C^{\infty }(M^{A},A)$ such that 
\begin{equation*}
X(f\cdot g)=X(f)\cdot g^{A}+f^{A}\cdot X(g),\quad \text{for any}\,f,g\in
C^{\infty }(M)\text{.}
\end{equation*}
\end{enumerate}
\end{theorem}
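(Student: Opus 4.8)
The plan is to establish the equivalence as a cycle of implications among the four characterizations, exploiting the fact that $M^A$ is both an ordinary smooth manifold and an $A$-manifold modeled on $A^n$. The equivalence $(1)\Leftrightarrow(2)$ is the classical identification of vector fields with derivations, valid on any smooth manifold: a section of $TM^A$ acts pointwise on $C^\infty(M^A)$ by directional differentiation, and conversely every $\mathbb{R}$-linear derivation of $C^\infty(M^A)$ arises from a unique such section. This step requires no Weil-algebra structure and can be cited as standard.

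**Next I would** address $(2)\Leftrightarrow(3)$, which is where the algebra $A$ enters. The point is that $C^\infty(M^A,A)\cong C^\infty(M^A)\otimes_{\mathbb{R}} A$, so an $A$-linear derivation of $C^\infty(M^A,A)$ is determined by its restriction to the subalgebra $C^\infty(M^A)\otimes 1\cong C^\infty(M^A)$, and conversely any derivation of $C^\infty(M^A)$ extends uniquely to an $A$-linear derivation of $C^\infty(M^A,A)$ by the Leibniz rule together with $A$-linearity. Concretely, for $\sum_i f_i\otimes a_i$ one sets $X(\sum_i f_i\otimes a_i)=\sum_i X(f_i)\otimes a_i$; the $A$-linearity forces the values on constants from $A$ to vanish under $X$, and the Leibniz rule is inherited. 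This gives a bijective correspondence between the two notions of derivation.

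**The main work** is $(3)\Leftrightarrow(4)$, connecting an $A$-linear derivation $X$ of $C^\infty(M^A,A)$ with a linear map $X\colon C^\infty(M)\to C^\infty(M^A,A)$ satisfying the twisted Leibniz rule with $g^A$ in place of $g$. Here I would use the injective homomorphism $f\mapsto f^A$ from $C^\infty(M)$ into $C^\infty(M^A,A)$ established earlier in the excerpt. Given a derivation as in $(3)$, precomposition with this homomorphism yields a linear map $C^\infty(M)\to C^\infty(M^A,A)$, and since $(f\cdot g)^A=f^A\cdot g^A$, applying $X$ and the Leibniz rule immediately produces $X(f\cdot g)=X(f)\cdot g^A+f^A\cdot X(g)$, which is exactly $(4)$. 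For the converse I would argue that the functions of the form $f^A$ (together with constants in $A$) generate $C^\infty(M^A,A)$ as an $A$-algebra in the appropriate sense — locally the coordinate functions $x_i^A$ serve as $A$-valued coordinates on $U^A\subset A^n$ — so a map satisfying $(4)$ extends uniquely by $A$-linearity and Leibniz to a genuine derivation as in $(3)$.

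**The hard part will be** this last extension step: one must verify that a map defined only on the image $\{f^A : f\in C^\infty(M)\}$ determines a well-defined $A$-linear derivation on all of $C^\infty(M^A,A)$, which requires knowing that this image generates $C^\infty(M^A,A)$ densely enough (via the local $A^n$-model and a partition-of-unity or Taylor-expansion argument) that the derivation is uniquely and consistently defined. I expect the cleanest route is to work in a local chart $(U,\varphi)$ with coordinates $(x_1,\dots,x_n)$, observe that $U^A$ identifies with an open subset of $A^n$ via $\xi\mapsto(\xi(x_1),\dots,\xi(x_n))$, and deduce that any $F\in C^\infty(U^A,A)$ is locally a smooth $A$-valued function of the $x_i^A$; the derivation is then pinned down by its values $X(x_i^A)$, matching the data supplied by $(4)$.
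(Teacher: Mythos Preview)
The paper does not actually prove this theorem: it is stated as a known result imported from the references \cite{bok1} and \cite{nbo} (``According to \cite{bok1}, \cite{nbo} we have the following equivalent assertions''), and no argument is supplied in the text. Consequently there is no in-paper proof to compare your proposal against.

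That said, your outline is a sound strategy and matches what one finds in those sources. The cycle $(1)\Leftrightarrow(2)$ is indeed classical; $(2)\Leftrightarrow(3)$ via the isomorphism $C^\infty(M^A,A)\cong C^\infty(M^A)\otimes_{\mathbb{R}}A$ (valid because $A$ is finite-dimensional over $\mathbb{R}$) is the standard reduction; and you have correctly located the substantive content in $(4)\Rightarrow(3)$, namely showing that the prolongations $f^A$ generate $C^\infty(M^A,A)$ in a strong enough sense that a map satisfying the twisted Leibniz rule on them extends uniquely to an $A$-linear derivation. Your local-chart argument via the identification $U^A\hookrightarrow A^n$, $\xi\mapsto(\xi(x_1),\dots,\xi(x_n))$, is exactly how this is handled in \cite{nbo}: one shows that any $\varphi\in C^\infty(U^A,A)$ can be expanded (via Taylor with respect to the nilpotent coordinates, using that $\mathfrak{m}^{h+1}=0$) in terms of the $x_i^A$, so the derivation is determined by its values on these. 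The only caution is that the ``generation'' is not algebraic in the naive sense but requires this finite Taylor argument exploiting the nilpotency of $\mathfrak{m}$; you allude to this but should make it explicit in a full write-up.
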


Consequenly \cite{nbo},

\begin{theorem}
The map 
\begin{equation*}
\mathfrak{X}(M^{A})\times \mathfrak{X}(M^{A})\longrightarrow \mathfrak{X}%
(M^{A}),(X,Y)\longmapsto \lbrack X,Y]=X\circ Y-Y\circ X
\end{equation*}%
is skew-symmetric $A$-bilinear and defines a structure of $A$-Lie algebra
over $\mathfrak{X}(M^{A})$.
\end{theorem}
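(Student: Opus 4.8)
The plan is to use the third characterization of vector fields supplied by the previous theorem, namely that $\mathfrak{X}(M^{A})$ coincides with the set of $A$-linear derivations of the algebra $C^{\infty}(M^{A},A)$. Once $X$ and $Y$ are regarded as such derivations, the entire statement reduces to checking four assertions: that $[X,Y]=X\circ Y-Y\circ X$ is again an $A$-linear derivation (so the map is well defined with values in $\mathfrak{X}(M^{A})$), that it is skew-symmetric, that it is $A$-bilinear, and that it satisfies the Jacobi identity.

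First I would verify that $[X,Y]$ is a derivation. Writing out $X(Y(f\cdot g))$ and $Y(X(f\cdot g))$ by applying the Leibniz rule twice, for $f,g\in C^{\infty}(M^{A},A)$, one sees that the same mixed second-order terms $X(f)\cdot Y(g)+Y(f)\cdot X(g)$ occur in both $X\circ Y$ and $Y\circ X$. The essential observation is that these mixed terms cancel in the difference, leaving precisely $[X,Y](f)\cdot g+f\cdot [X,Y](g)$. Thus, although neither $X\circ Y$ nor $Y\circ X$ is a derivation on its own, their commutator is. The $A$-linearity of $[X,Y]$ is immediate, since it is a composition and difference of $A$-linear maps, so by the previous theorem $[X,Y]\in\mathfrak{X}(M^{A})$ and the map is well defined.

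Skew-symmetry is formal from the definition. For $A$-bilinearity, the decisive point is that for a constant $a\in A$ the $A$-linearity of $Y$ yields $Y(a\cdot X(f))=a\cdot Y(X(f))$, whence $[a\cdot X,Y]=a\cdot [X,Y]$; combined with additivity in each slot and skew-symmetry, this gives bilinearity over $A$ in both arguments. The Jacobi identity is then a purely formal consequence of the associativity of composition of operators: expanding each double bracket, for instance $[[X,Y],Z]=XYZ-YXZ-ZXY+ZYX$, and summing over the three cyclic permutations of $X,Y,Z$, all twelve terms cancel in pairs.

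I expect the only genuine subtlety to lie in the first step, namely confirming that the second-order contributions in $X\circ Y-Y\circ X$ really do cancel, so that the commutator remains a first-order $A$-linear derivation and hence an element of $\mathfrak{X}(M^{A})$. The remaining properties are formal once well-definedness has been secured through the third characterization of the previous theorem.
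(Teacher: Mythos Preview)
Your proposal is correct. The argument is the standard one: identify $\mathfrak{X}(M^{A})$ with $Der_{A}[C^{\infty}(M^{A},A)]$ via the third characterization of the preceding theorem, check that the commutator of two $A$-linear derivations is again an $A$-linear derivation (the mixed terms cancel exactly as you describe), and then observe that skew-symmetry, $A$-bilinearity, and the Jacobi identity are formal.

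There is essentially nothing to compare against in the paper itself: the theorem is stated as a consequence of the preceding characterization and attributed to \cite{nbo}, with no proof given in the present text. Your write-up is therefore more detailed than what appears here, and it is precisely the argument one would expect the cited reference to contain.
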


Thus, if $Der_{A}[C^{\infty }(M^{A},A)]$ denotes the $C^{\infty }(M^{A},A)$%
-module of derivations of $C^{\infty }(M^{A},A)$ which are $A$-linear, a
vector field on $M^{A}$ is a derivation of $C^{\infty }(M^{A},A)$ which is $%
A $-linear i.e a $A$-linear map 
\begin{equation*}
X:C^{\infty }(M^{A},A)\longrightarrow C^{\infty }(M^{A},A)
\end{equation*}%
such that 
\begin{equation*}
X(\varphi \cdot \psi )=X(\varphi )\cdot \psi +\varphi \cdot X(\psi ),\quad 
\text{for any}\,\varphi ,\psi \in C^{\infty }(M^{A},A)\text{.}
\end{equation*}

Thus, we have 
\begin{equation*}
\mathfrak{X}(M^{A})=Der_{A}[C^{\infty }(M^{A},A)]\text{.}
\end{equation*}

\begin{proposition}
\cite{bok1}, \cite{nbo} If $\theta $ $:C^{\infty }(M)\longrightarrow
C^{\infty }(M)$ is a vector field on $M$, then there exists one and only one 
$A$-linear derivation 
\begin{equation*}
\theta ^{A}:C^{\infty }(M^{A},A)\longrightarrow C^{\infty }(M^{A},A)
\end{equation*}%
such that 
\begin{equation*}
\theta ^{A}(f^{A})=\left[ \theta (f)\right] ^{A}
\end{equation*}%
for any $f\in C^{\infty }(M)$.
\end{proposition}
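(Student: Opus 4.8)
The plan is to read the statement through the fourth characterization of vector fields on $M^A$ supplied by the Theorem, which identifies an $A$-linear derivation of $C^\infty(M^A,A)$ with a linear map $X:C^\infty(M)\longrightarrow C^\infty(M^A,A)$ satisfying the Leibniz identity $X(f\cdot g)=X(f)\cdot g^A+f^A\cdot X(g)$ for all $f,g\in C^\infty(M)$. For existence, I would define
\[
X:C^\infty(M)\longrightarrow C^\infty(M^A,A),\qquad X(f)=[\theta(f)]^A,
\]
and check that it meets the hypotheses of assertion 4.

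The verification is a direct computation using that $\theta$ is a derivation of $C^\infty(M)$ and that $f\mapsto f^A$ is a homomorphism of algebras:
\[
X(f\cdot g)=[\theta(f\cdot g)]^A=[\theta(f)\cdot g+f\cdot\theta(g)]^A=[\theta(f)]^A\cdot g^A+f^A\cdot[\theta(g)]^A=X(f)\cdot g^A+f^A\cdot X(g),
\]
and $\mathbb{R}$-linearity of $X$ follows from the additivity and homogeneity of $(\cdot)^A$. By the equivalence of assertions 3 and 4, this map $X$ corresponds to an $A$-linear derivation $\theta^A$ of $C^\infty(M^A,A)$, and the correspondence is exactly $\theta^A(f^A)=X(f)=[\theta(f)]^A$, which is the required identity.

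For uniqueness, I would use that the same equivalence is a bijection: a vector field on $M^A$ in the sense of assertion 3 is determined by the associated map of assertion 4. If $\theta^A$ and another $A$-linear derivation $D$ both satisfy $D(f^A)=[\theta(f)]^A$ for every $f$, then both have the same associated map $f\mapsto[\theta(f)]^A$, hence $D=\theta^A$. I expect the genuine content — the point one would have to unwind if the equivalence in the Theorem were not available — to lie precisely in the injectivity of this correspondence, namely in showing that an $A$-linear derivation of $C^\infty(M^A,A)$ is pinned down by its values on the functions $f^A$. That step rests on the local model of $M^A$ over $A^n$: in a chart with coordinates $(x_1,\dots,x_n)$ the functions $x_1^A,\dots,x_n^A$ are coordinates on $U^A\subset A^n$, and a chain-rule argument for $A$-smooth functions expresses $D$ in terms of the values $D(x_i^A)$, after which a partition-of-unity gluing passes the determination from charts to all of $M^A$.
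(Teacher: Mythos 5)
The paper never proves this proposition: it is stated with citations to \cite{bok1} and \cite{nbo} and no proof environment follows it, so there is no internal argument to compare yours against; your proposal has to be judged on its own terms within the framework of the paper's Theorem 1. Your existence half is correct and is the standard construction: $f\longmapsto \left[ \theta (f)\right] ^{A}$ is $\mathbb{R}$-linear and satisfies the Leibniz identity of assertion 4 because $\theta $ is a derivation and $f\longmapsto f^{A}$ is an algebra homomorphism, and the passage from assertion 4 to assertion 3 then produces $\theta ^{A}$ with $\theta ^{A}(f^{A})=\left[ \theta (f)\right] ^{A}$.

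The gap is in the uniqueness half, exactly at the point you yourself flag as the genuine content. The injectivity you need --- that an arbitrary $A$-linear derivation $D$ of $C^{\infty }(M^{A},A)$ is pinned down by the values $D(f^{A})$ --- cannot be obtained from the coordinate/chain-rule argument you sketch, and in fact it fails in that generality. The obstruction is the nilpotency of the maximal ideal: $A$-linearity gives $D(f^{A})=\sum_{\alpha }D(a_{\alpha }^{\ast }\circ f^{A})\,a_{\alpha }$, where each $D(a_{\alpha }^{\ast }\circ f^{A})$ is $A$-valued, and multiplying by nilpotent elements $a_{\alpha }$ destroys information, so $D(x_{i}^{A})$ does not determine $D$ on the real coordinate functions $a_{\alpha }^{\ast }\circ x_{i}^{A}$. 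Concretely, take $M=\mathbb{R}$ and $A=\mathbb{R}[\epsilon ]/(\epsilon ^{2})$, so that $M^{A}\cong \mathbb{R}^{2}$ with coordinates $(x,\dot{x})$ and $f^{A}=f(x)+\epsilon \,\dot{x}f^{\prime }(x)$. The map
\begin{equation*}
D(\varphi +\epsilon \psi )=\epsilon \,\frac{\partial \varphi }{\partial \dot{x}}
\end{equation*}
is a nonzero $A$-linear derivation of $C^{\infty }(M^{A},A)$ (it is $\epsilon \cdot Y$, where $Y$ is the componentwise extension of $\partial /\partial \dot{x}$), and it vanishes on every $f^{A}$ because $f(x)$ does not depend on $\dot{x}$; hence $\theta ^{A}$ and $\theta ^{A}+D$ satisfy the same defining identity. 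Your argument does prove uniqueness within the smaller class of derivations that preserve real-valued functions (equivalently, componentwise extensions of vector fields on $M^{A}$, i.e.\ genuine sections of $TM^{A}$): there the components of $D(x_{i}^{A})$ are exactly the real functions $D(a_{\alpha }^{\ast }\circ x_{i}^{A})$, and locality plus coordinates finishes the job. That restricted class is the one in which the proposition, and assertion 3 of Theorem 1, must be read. As written, your uniqueness step either takes the full bijection between assertions 3 and 4 as a black box --- which is no worse than the paper, whose ``proof'' is a citation, but then your announced justification of injectivity is doing no work --- or it rests on a claim that the example above refutes.
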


\begin{proposition}
\cite{bok1}, \cite{nbo} If $\theta ,\theta _{1}$ and $\theta _{2}$ are
vector fields on $M$ and if $f\in C^{\infty }(M)$, then we have:

\begin{enumerate}
\item $(\theta _{1}+\theta _{2})^{A}=\theta _{1}^{A}+\theta _{2}^{A}$;

\item $(f\cdot \theta )^{A}=f^{A}\cdot \theta ^{A}$;

\item $[\theta _{1},\theta _{2}]^{A}=[\theta _{1}^{A},\theta _{2}^{A}]$.
\end{enumerate}
\end{proposition}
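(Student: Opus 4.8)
The plan is to reduce all three identities to the uniqueness clause of the preceding proposition: the prolongation $\theta^{A}$ is characterized as the \emph{unique} $A$-linear derivation of $C^{\infty}(M^{A},A)$ whose value on each $f^{A}$ equals $[\theta(f)]^{A}$. Consequently, to establish an equality of the form $\vartheta^{A}=D$, it suffices to show that $D$ lies in $Der_{A}[C^{\infty}(M^{A},A)]$ and that $D(f^{A})=[\vartheta(f)]^{A}$ for every $f\in C^{\infty}(M)$; the equality then follows at once by uniqueness. I will apply this single scheme to each of the three assertions, using only the Leibniz rule, $A$-linearity, and the elementary identities $(f+g)^{A}=f^{A}+g^{A}$ and $(f\cdot g)^{A}=f^{A}\cdot g^{A}$ recorded earlier.

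For (1), I would first note that a sum of two $A$-linear derivations is again an $A$-linear derivation, so $\theta_{1}^{A}+\theta_{2}^{A}$ is a legitimate candidate. Evaluating it on $f^{A}$ and using the defining property of each prolongation together with additivity yields $(\theta_{1}^{A}+\theta_{2}^{A})(f^{A})=[\theta_{1}(f)]^{A}+[\theta_{2}(f)]^{A}=[(\theta_{1}+\theta_{2})(f)]^{A}$, which is precisely the characterizing value of $(\theta_{1}+\theta_{2})^{A}$; uniqueness then forces the identity. For (2), the map $\psi\longmapsto f^{A}\cdot\theta^{A}(\psi)$ is an $A$-linear derivation, because multiplication by the fixed element $f^{A}\in C^{\infty}(M^{A},A)$ preserves the Leibniz rule in the commutative algebra $C^{\infty}(M^{A},A)$. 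Evaluating this operator on $g^{A}$ and invoking $(f\cdot\theta(g))^{A}=f^{A}\cdot[\theta(g)]^{A}$ shows it agrees with the prescribed value $[(f\cdot\theta)(g)]^{A}$, so uniqueness again closes the argument.

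For (3), the essential point---and the one requiring the most care---is that the commutator $[\theta_{1}^{A},\theta_{2}^{A}]=\theta_{1}^{A}\circ\theta_{2}^{A}-\theta_{2}^{A}\circ\theta_{1}^{A}$ is still an $A$-linear derivation: applying the Leibniz rule twice, one checks that the second-order terms cancel, leaving a first-order operator that itself satisfies Leibniz, while $A$-linearity is inherited from the two factors. Granting this, I evaluate the commutator on $f^{A}$; repeatedly using $\theta_{i}^{A}(g^{A})=[\theta_{i}(g)]^{A}$ converts $[\theta_{1}^{A},\theta_{2}^{A}](f^{A})$ into $[\theta_{1}(\theta_{2}(f))]^{A}-[\theta_{2}(\theta_{1}(f))]^{A}=[[\theta_{1},\theta_{2}](f)]^{A}$, which is the characterizing value of $[\theta_{1},\theta_{2}]^{A}$. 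Uniqueness then delivers the third identity.

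In all three cases the routine verifications (the Leibniz rule for $f^{A}\cdot\theta^{A}$ and the cancellation of second-order terms in the commutator) are standard algebra; the conceptual content lies entirely in recognizing that each right-hand side belongs to $Der_{A}[C^{\infty}(M^{A},A)]$ and reproduces the correct value on the image of the injection $f\longmapsto f^{A}$, after which the uniqueness statement does all the remaining work.
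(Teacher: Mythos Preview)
Your argument is correct. The paper does not supply its own proof of this proposition---it simply cites \cite{bok1} and \cite{nbo}---so there is no in-paper proof to compare against. That said, your approach via the uniqueness clause of the preceding proposition is exactly the natural one and is the argument used in the cited sources: for each identity you verify that the right-hand side lies in $Der_{A}[C^{\infty}(M^{A},A)]$ and matches the required value on every $f^{A}$, then invoke uniqueness. The checks you outline (closure of $A$-linear derivations under sums, scalar multiplication by elements of $C^{\infty}(M^{A},A)$, and commutators) are routine and correctly handled.
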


\begin{corollary}
The map 
\begin{equation*}
\mathfrak{X}(M)\longrightarrow Der_{A}[C^{\infty }(M^{A},A)],\theta
\longmapsto \theta ^{A}
\end{equation*}%
is an injective homomorphism of $\mathbb{R}$-Lie algebras. If $\mu
:A\longrightarrow A$, is a $\mathbb{R}$-endomorphism, and $\theta :C^{\infty
}(M)\longrightarrow C^{\infty }(M)$ a vector field on $M$, then 
\begin{equation*}
\theta ^{A}(\mu \circ f^{A})=\mu \circ \lbrack \theta (f)]^{A},\quad \text{%
for any}\,f\in C^{\infty }(M)\text{.}
\end{equation*}
\end{corollary}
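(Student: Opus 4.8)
The plan is to treat the two assertions separately. For the first I would draw on the immediately preceding Proposition, which records $(\theta_{1}+\theta_{2})^{A}=\theta_{1}^{A}+\theta_{2}^{A}$, $(f\cdot\theta)^{A}=f^{A}\cdot\theta^{A}$ and $[\theta_{1},\theta_{2}]^{A}=[\theta_{1}^{A},\theta_{2}^{A}]$; for the second I would use the defining relation $\theta^{A}(f^{A})=[\theta(f)]^{A}$ together with the $A$-linearity of $\theta^{A}$ and the identification $\mathfrak{X}(M^{A})=Der_{A}[C^{\infty}(M^{A},A)]$.

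First I would check that $\theta\longmapsto\theta^{A}$ is a homomorphism of $\mathbb{R}$-Lie algebras. Additivity is item (1) of the preceding Proposition. For $\mathbb{R}$-homogeneity I would apply item (2) to a constant function $\lambda\in\mathbb{R}$: since $\lambda^{A}=\lambda$ after the identification $\mathbb{R}^{A}=A$, this yields $(\lambda\cdot\theta)^{A}=\lambda\cdot\theta^{A}$. Compatibility with the brackets is exactly item (3). For injectivity, suppose $\theta^{A}=0$; then for every $f\in C^{\infty}(M)$ the defining relation gives $[\theta(f)]^{A}=\theta^{A}(f^{A})=0$, and since $f\longmapsto f^{A}$ is injective we get $\theta(f)=0$ for all $f$, hence $\theta=0$.

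For the second assertion the key point is that $\theta^{A}$ respects the real-component structure of $A$-valued functions. Fix an $\mathbb{R}$-basis $(e_{\alpha})$ of $A$ and write every $\Phi\in C^{\infty}(M^{A},A)$ as $\Phi=\sum_{\alpha}\Phi_{\alpha}e_{\alpha}$ with $\Phi_{\alpha}$ real-valued. By the theorem on the equivalent descriptions of vector fields on $M^{A}$, the $A$-linear derivation $\theta^{A}$ corresponds to a genuine vector field on $M^{A}$, so it is the $A$-linear extension of one underlying real derivation acting on each component $\Phi_{\alpha}$ while fixing the constants $e_{\alpha}$. An $\mathbb{R}$-endomorphism $\mu$ of $A$, on the other hand, only recombines the components with real constant coefficients: $\mu\circ\Phi=\sum_{\alpha}\Phi_{\alpha}\,\mu(e_{\alpha})$. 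Because those coefficients are real constants, post-composition by $\mu$ commutes with $\theta^{A}$, i.e.
\[
\theta^{A}(\mu\circ\Phi)=\mu\circ\theta^{A}(\Phi)\qquad\text{for every }\Phi\in C^{\infty}(M^{A},A).
\]
Applying this to $\Phi=f^{A}$ and invoking $\theta^{A}(f^{A})=[\theta(f)]^{A}$ gives $\theta^{A}(\mu\circ f^{A})=\mu\circ\theta^{A}(f^{A})=\mu\circ[\theta(f)]^{A}$, as claimed.

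The step deserving the most care, and the main obstacle, is the commutation $\theta^{A}\circ(\mu\circ-)=(\mu\circ-)\circ\theta^{A}$: one cannot simply pull $\mu$ through $\theta^{A}$ the way one would an $A$-linear map, precisely because $\mu$ is only $\mathbb{R}$-linear. What makes the commutation valid is that $\theta^{A}$ is itself real on the components — equivalently, it is the prolongation of an honest vector field on $M^{A}$ rather than an arbitrary $A$-linear derivation — so it commutes with the real-constant-coefficient recombination performed by $\mu$. I would make this rigorous by expanding $\mu(e_{\alpha})=\sum_{\beta}\mu_{\beta\alpha}e_{\beta}$ with $\mu_{\beta\alpha}\in\mathbb{R}$ and comparing the two sides component by component, the real constants $\mu_{\beta\alpha}$ passing freely through the underlying real derivation.
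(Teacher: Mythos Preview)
The paper states this corollary without proof, so there is no argument to compare against directly; it is meant to follow from the preceding Proposition and the cited references. Your treatment of the first assertion is exactly what that Proposition is set up to yield, and the injectivity argument via $f\mapsto f^{A}$ is correct.

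For the second assertion your strategy is the right one, and you correctly isolate the delicate step. The point that deserves more care is your claim that $\theta^{A}$ is the $A$-linear extension of a single real derivation of $C^{\infty}(M^{A})$, i.e.\ that $\theta^{A}(\sum_{\alpha}\Phi_{\alpha}e_{\alpha})=\sum_{\alpha}D(\Phi_{\alpha})e_{\alpha}$ for some $D\in Der_{\mathbb{R}}[C^{\infty}(M^{A})]$. You justify this by invoking the equivalence in Theorem~1, but be aware that the map $D\mapsto D\otimes id_{A}$ from $Der_{\mathbb{R}}[C^{\infty}(M^{A})]$ into $Der_{A}[C^{\infty}(M^{A},A)]$ is injective and \emph{not} surjective: for $A=\mathbb{R}[\varepsilon]/(\varepsilon^{2})$ and $M=\mathbb{R}$, the map $F+\varepsilon G\mapsto \varepsilon\,\partial_{x}F$ is an $A$-linear derivation of $C^{\infty}(T\mathbb{R},A)$ that sends real functions to non-real ones, hence is not of the form $D\otimes id_{A}$. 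So merely knowing that $\theta^{A}$ is an $A$-linear derivation does not force the componentwise action you need, and Theorem~1 as stated does not supply it either.

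What actually makes your argument work is the specific construction of $\theta^{A}$ in the references: it is the $A$-linear extension of the complete lift of $\theta$ to $M^{A}$ (equivalently, the derivation whose flow is $(\phi_{t})^{A}$ when $\phi_{t}$ is the flow of $\theta$). Once you invoke that---rather than the bare equivalence of Theorem~1---your component-by-component verification with the real constants $\mu_{\beta\alpha}$ is exactly right, and the identity $\theta^{A}(\mu\circ f^{A})=\mu\circ[\theta(f)]^{A}$ follows.
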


\subsection{\protect\bigskip Poisson manifold}

We recall that a Poisson structure on a smooth manifold $M$ is due to the
existence of a bracket $\{,\}_{M}$ on $C^{\infty }(M)$ such that the pair $%
(C^{\infty }(M),\{,\}_{M})$ is a real Lie algebra such that, for any $f\in
C^{\infty }(M)$ the map

\begin{equation*}
ad(f):C^{\infty }(M)\longrightarrow C^{\infty }(M),g\longmapsto \{f,g\}_{M}
\end{equation*}

is a derivation of commutative algebra i.e

\begin{equation*}
\{f,g\cdot h\}_{M}=\{f,g\}_{M}\cdot h+g\cdot \{f,h\}_{M}
\end{equation*}

for $f,g,h\in $ $C^{\infty }(M)$. In this case we say that $C^{\infty }(M)$
is a Poisson algebra and $M$ is a Poisson manifold \cite{Lic},\cite{vai}, 
\cite{oka2}.

Let $\Omega _{\mathbb{R}}[C^{\infty }(M)]$ be the\ $C^{\infty }(M)$-module
of K\"{a}lher differentials of $C^{\infty }(M)$ and 
\begin{equation*}
\delta _{M}:C^{\infty }(M)\longrightarrow \Omega _{\mathbb{R}}[C^{\infty
}(M)],f\longmapsto \overline{f\otimes 1_{C^{\infty }(M)}-1_{C^{\infty
}(M)}\otimes f}
\end{equation*}%
the canonical derivation which{\LARGE \ }the image of $\delta _{M}$
generates the\ $C^{\infty }(M)$-module $\Omega _{\mathbb{R}}[C^{\infty }(M)]$
i.e for $x\in \Omega _{\mathbb{R}}[C^{\infty }(M)]$,%
\begin{equation*}
x=\sum\limits_{i\in I:finite}f_{i}\cdot \delta _{M}(g_{i})\text{,}
\end{equation*}%
with $f_{i},g_{i}\in C^{\infty }(M)$ for any $i\in I$\cite{cap}, \cite{oka2}%
, \cite{oka3}.

The manifold $M$ is a Poisson manifold if and only if there exists a
skew-symmetric $2$-form 
\begin{equation*}
\omega _{M}:\Omega _{\mathbb{R}}[C^{\infty }(M)]\times \Omega _{\mathbb{R}%
}[C^{\infty }(M)]\longrightarrow C^{\infty }(M)
\end{equation*}%
such that for any $f$ and $g$ in $C^{\infty }(M)$, 
\begin{equation*}
\{f,g\}_{M}=-\omega _{M}[\delta _{M}(f),\delta _{M}(g)]
\end{equation*}%
defines a structure of Lie algebra over $C^{\infty }(M)$ \cite{oka2}, \cite%
{oka3}. In this case, we say that $\omega _{M}$ is the Poisson $2$-form of
the Poisson manifold $M$ and we denote $(M,\omega _{M})$ the Poisson
manifold of Poisson $2$-form $\omega _{M}$.

The main goal of this paper is to study the prolongation of the Poisson $2$%
-form $\omega _{M}$ of Poisson manifold on Weil bundles.

\section{The algebra of K\"{a}hler forms on $C^{\infty }(M^{A},A)$}

\begin{definition}
The\ $C^{\infty }(M^{A})$-module of K\"{a}lher differentials of $C^{\infty
}(M^{A})$ is the set 
\begin{equation*}
\Omega _{\mathbb{R}}[C^{\infty }(M^{A})]=\frac{J}{J^{2}}
\end{equation*}%
where $J\ $is the $C^{\infty }(M^{A})$-submodule of $C^{\infty
}(M^{A})\bigotimes\limits_{\mathbb{R}}C^{\infty }(M^{A})$ generated by the
elements of the form $F\otimes 1_{C^{\infty }(M^{A})}-1_{C^{\infty
}(M^{A})}\otimes F$ with $F\in C^{\infty }(M^{A})$. Thus, the map%
\begin{equation*}
d_{M^{A}}:C^{\infty }(M^{A})\longrightarrow \Omega _{\mathbb{R}}[C^{\infty
}(M^{A})],F\longmapsto \overline{F\otimes 1_{C^{\infty
}(M^{A})}-1_{C^{\infty }(M^{A})}\otimes F}
\end{equation*}%
is a derivation and the image of $d_{M^{A}}$ generates $\Omega _{\mathbb{R}%
}[C^{\infty }(M^{A})]$.
\end{definition}

The $A$-algebra $C^{\infty }(M^{A},A)\bigotimes\limits_{A}C^{\infty
}(M^{A},A)$ admits a structure of $C^{\infty }(M^{A},A)$-module defined by
the homomorphism of $A$-algebras%
\begin{equation*}
C^{\infty }(M^{A},A)\longrightarrow C^{\infty }(M^{A},A)\underset{A}{\otimes 
}C^{\infty }(M^{A},A),\varphi \longmapsto \varphi \otimes 1_{C^{\infty
}(M^{A},A)}\text{.}
\end{equation*}%
In this case, we say that $C^{\infty
}(M^{A},A)\bigotimes\limits_{A}C^{\infty }(M^{A},A)$ admits a structure of $%
C^{\infty }(M^{A},A)$-module defined by the first factor. The second factor
is defined by%
\begin{equation*}
C^{\infty }(M^{A},A)\longrightarrow C^{\infty }(M^{A},A)\underset{A}{\otimes 
}C^{\infty }(M^{A},A),\varphi \longmapsto 1_{C^{\infty }(M^{A},A)}\otimes
\varphi \text{.}
\end{equation*}

The map 
\begin{equation*}
C^{\infty }(M^{A},A)\times C^{\infty }(M^{A},A)\longrightarrow C^{\infty
}(M^{A},A),(\varphi ,\psi )\longmapsto \varphi \cdot \psi
\end{equation*}%
being $A$-bilinear, then there exists a unique $A$-linear map%
\begin{equation*}
m:C^{\infty }(M^{A},A)\underset{A}{\otimes }C^{\infty
}(M^{A},A)\longrightarrow C^{\infty }(M^{A},A)
\end{equation*}%
such that%
\begin{equation*}
m(\varphi \otimes \psi )=\varphi \cdot \psi \text{.}
\end{equation*}%
The kernel$\ $ of $m$ is the $C^{\infty }(M^{A},A)$-submodule of $C^{\infty
}(M^{A},A)\otimes C^{\infty }(M^{A},A)$ generated by the elements of the
form $\varphi \otimes 1_{C^{\infty }(M^{A},A)}-1_{C^{\infty
}(M^{A},A)}\otimes \varphi $ with $\varphi \in C^{\infty }(M^{A},A)$.

We denote $\Omega _{A}[C^{\infty }(M^{A},A)]$, the $C^{\infty }(M^{A},A)$%
-module of K\"{a}lher differentials of $C^{\infty }(M^{A},A)$ which are $A$%
-linears. In this case, for $\varphi \in C^{\infty }(M^{A},A)$, we denote $%
\overline{\varphi \otimes 1_{C^{\infty }(M^{A},A)}-1_{C^{\infty
}(M^{A},A)}\otimes \varphi }$, the class of $\varphi \otimes 1_{C^{\infty
}(M^{A},A)}-1_{C^{\infty }(M^{A},A)}\otimes \varphi $ in $C^{\infty
}(M^{A},A)$.

The map%
\begin{equation*}
C^{\infty }(M)\longrightarrow \Omega _{\mathbb{A}}[C^{\infty
}(M^{A},A)],f\longmapsto \overline{f^{A}\otimes 1_{C^{\infty
}(M^{A},A)}-1_{C^{\infty }(M^{A},A)}\otimes f^{A}}
\end{equation*}%
is a derivation.

Thus,

\begin{proposition}
There exists a unique $A$-linear derivation%
\begin{equation*}
\delta _{M^{A}}^{A}:C^{\infty }(M^{A},A)\longrightarrow \Omega
_{A}[C^{\infty }(M^{A},A)]
\end{equation*}%
such that%
\begin{equation*}
\delta _{M^{A}}^{A}(f^{A})=[\delta _{M}(f)]^{A}
\end{equation*}%
for any $f\in C^{\infty }(M)$.
\end{proposition}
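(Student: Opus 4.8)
The plan is to obtain $\delta_{M^{A}}^{A}$ as the \emph{canonical} $A$-linear K\"{a}hler derivation of the $A$-algebra $C^{\infty}(M^{A},A)$, namely
\[
\delta_{M^{A}}^{A}:C^{\infty}(M^{A},A)\longrightarrow \Omega_{A}[C^{\infty}(M^{A},A)],\quad \varphi\longmapsto \overline{\varphi\otimes 1-1\otimes\varphi},
\]
and then to check that it both satisfies the required identity and is the only $A$-linear derivation doing so. That this map is an $A$-linear derivation is immediate from the construction of $\Omega_{A}[C^{\infty}(M^{A},A)]$ recalled above: it is a derivation because the kernel of $m$ is generated by the elements $\varphi\otimes 1-1\otimes\varphi$, and it is $A$-linear because every scalar $a\in A$ satisfies $a\otimes 1-1\otimes a=0$ in the tensor product over $A$.

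For existence it then remains only to verify the compatibility formula. Evaluating this canonical derivation on a prolonged function $f^{A}$ gives $\delta_{M^{A}}^{A}(f^{A})=\overline{f^{A}\otimes 1-1\otimes f^{A}}$. On the other hand, the map $f\longmapsto \overline{f^{A}\otimes 1-1\otimes f^{A}}$ from $C^{\infty}(M)$ to $\Omega_{A}[C^{\infty}(M^{A},A)]$ is a derivation, as noted just before the statement, so by the universal property of the pair $(\Omega_{\mathbb{R}}[C^{\infty}(M)],\delta_{M})$ it factors uniquely through $\delta_{M}$; this factorization is precisely what defines the prolongation $[\delta_{M}(f)]^{A}$ of the K\"{a}hler differential $\delta_{M}(f)$, so that $\delta_{M^{A}}^{A}(f^{A})=[\delta_{M}(f)]^{A}$ holds by construction.

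For uniqueness, suppose $D$ is any $A$-linear derivation into $\Omega_{A}[C^{\infty}(M^{A},A)]$ with $D(f^{A})=[\delta_{M}(f)]^{A}$ for every $f\in C^{\infty}(M)$. Then $D-\delta_{M^{A}}^{A}$ is an $A$-linear derivation vanishing on every $f^{A}$, and the claim is that such a derivation is identically zero. This is the crux, and I expect it to be the main obstacle: one must justify that an $A$-linear derivation is pinned down by its values on the image of $f\longmapsto f^{A}$, i.e.\ that these functions generate $C^{\infty}(M^{A},A)$ in the differential sense. I would argue this locally, using that in a chart $(U,x_{1},\dots,x_{n})$ the functions $x_{i}^{A}=(x_{i})^{A}$ are coordinate functions on $U^{A}\subset A^{n}$ and that an arbitrary $\varphi\in C^{\infty}(M^{A},A)$ depends smoothly on them; a Hadamard-type expansion reduces $\varphi$, modulo second-order terms killed by the derivation property, to a smooth function of the $x_{i}^{A}$, whereupon the Leibniz rule and $A$-linearity propagate the agreement from the generators $x_{i}^{A}$ to all of $C^{\infty}(M^{A},A)$. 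Since $D-\delta_{M^{A}}^{A}$ annihilates the $x_{i}^{A}$, it vanishes, giving $D=\delta_{M^{A}}^{A}$. This is the same generation mechanism underlying the characterization of vector fields on $M^{A}$ as $A$-linear maps $C^{\infty}(M)\to C^{\infty}(M^{A},A)$ and the cited construction of $\theta^{A}$, so once that step is granted everything else is routine.
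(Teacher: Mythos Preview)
Your argument is correct, but it follows a different route from the paper's. You take $\delta_{M^{A}}^{A}$ to be the canonical K\"ahler differential of the $A$-algebra $C^{\infty}(M^{A},A)$ directly, invoking its general properties, and then verify the compatibility with $[\delta_{M}(f)]^{A}$. The paper instead \emph{builds} $\delta_{M^{A}}^{A}$ as the composite
\[
C^{\infty}(M^{A},A)\xrightarrow{\ \sigma^{-1}\ }A\otimes C^{\infty}(M^{A})\xrightarrow{\ id_{A}\otimes d_{M^{A}}\ }A\otimes \Omega_{\mathbb{R}}[C^{\infty}(M^{A})]\xrightarrow{\ \varpi\ }\Omega_{A}[C^{\infty}(M^{A},A)],
\]
where $\sigma^{-1}$ is the basis decomposition $\varphi\mapsto \sum_{\alpha}a_{\alpha}\otimes(a_{\alpha}^{\ast}\circ\varphi)$, and then checks additivity, $A$-linearity, and the Leibniz rule by hand for this composite before evaluating on $f^{A}$. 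Your construction is more conceptual and avoids the auxiliary passage through the real-valued K\"ahler module $\Omega_{\mathbb{R}}[C^{\infty}(M^{A})]$; the paper's has the minor advantage of making the link with $d_{M^{A}}$ explicit, which is reused later. One further difference: you actually address uniqueness (via the local generation of $C^{\infty}(M^{A},A)$ by the $x_{i}^{A}$, parallel to the characterization of vector fields cited in the paper), whereas the paper's proof omits the uniqueness argument entirely and only exhibits the map.
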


\begin{proof}
Let 
\begin{equation*}
\delta _{M^{A}}^{A}:C^{\infty }(M^{A},A)\overset{\sigma ^{-1}}{%
\longrightarrow }A\otimes C^{\infty }(M^{A})\overset{id_{A}\otimes d_{M^{A}}}%
{\longrightarrow }A\otimes \Omega _{\mathbb{R}}[C^{\infty }(M^{A})]\overset{%
\varpi }{\longrightarrow }\Omega _{A}[C^{\infty }(M^{A},A)]
\end{equation*}%
be that map, where%
\begin{equation*}
\sigma ^{-1}:\varphi =\sum\limits_{\alpha =1}^{\dim A}(a_{\alpha }^{\ast
}\circ \varphi )\cdot a_{\alpha }\longmapsto \sum\limits_{\alpha =1}^{\dim
A}a_{\alpha }\otimes (a_{\alpha }^{\ast }\circ \varphi )
\end{equation*}%
with $(a_{\alpha })_{\alpha =1,\cdot \cdot \cdot ,\dim A}$ a basis of $A$
and $(a_{\alpha }^{\ast })_{\alpha =1,\cdot \cdot \cdot ,\dim A}$ the dual
basis of the basis $(a_{\alpha })_{\alpha =1,\cdot \cdot \cdot ,\dim A}$,%
\begin{equation*}
id_{A}\otimes d_{M^{A}}:\sum\limits_{\alpha =1}^{\dim A}a_{\alpha }\otimes
(a_{\alpha }^{\ast }\circ \varphi )\longmapsto \sum\limits_{\alpha =1}^{\dim
A}a_{\alpha }\otimes d_{M^{A}}(a_{\alpha }^{\ast }\circ \varphi
)=\sum\limits_{\alpha =1}^{\dim A}a_{\alpha }\otimes \left[ \overline{%
(a_{\alpha }^{\ast }\circ \varphi )\otimes 1_{C^{\infty
}(M^{A})}-1_{C^{\infty }(M^{A})}\otimes (a_{\alpha }^{\ast }\circ \varphi })%
\right] \text{,}
\end{equation*}%
\begin{equation*}
\varpi :\sum\limits_{\alpha =1}^{\dim A}a_{\alpha }\otimes
d_{M^{A}}(a_{\alpha }^{\ast }\circ \varphi )\longmapsto \sum\limits_{\alpha
=1}^{\dim A}\left[ \overline{(a_{\alpha }^{\ast }\circ \varphi )a_{\alpha
}\otimes 1_{C^{\infty }(M^{A},A)}-1_{C^{\infty }(M^{A},A)}\otimes (a_{\alpha
}^{\ast }\circ \varphi )a_{\alpha }}\right] \text{.}
\end{equation*}%
Thus, 
\begin{eqnarray*}
\delta _{M^{A}}^{A}(\varphi ) &=&[\varpi \circ (id_{A}\otimes
d_{M^{A}})\circ \sigma ^{-1}](\varphi ) \\
&=&\sum\limits_{\alpha =1}^{\dim A}\overline{\left[ (a_{\alpha }^{\ast
}\circ \varphi )a_{\alpha }\otimes 1_{C^{\infty }(M^{A},A)}-1_{C^{\infty
}(M^{A},A)}\otimes (a_{\alpha }^{\ast }\circ \varphi )a_{\alpha }\right] }%
\text{.}
\end{eqnarray*}%
- For any $\varphi ,\psi \in C^{\infty }(M^{A},A)$, we have%
\begin{eqnarray*}
\delta _{M^{A}}^{A}(\varphi +\psi ) &=&[\varpi \circ (id_{A}\otimes
d_{M^{A}})\circ \sigma ^{-1}](\varphi +\psi ) \\
&=&[\varpi \circ (id_{A}\otimes d_{M^{A}})](\sigma ^{-1}(\varphi )+\sigma
^{-1}(\psi )) \\
&=&[\varpi \circ (id_{A}\otimes d_{M^{A}})](\sigma ^{-1}(\varphi ))+[\varpi
\circ (id_{A}\otimes d_{M^{A}})](\sigma ^{-1}(\psi )) \\
&=&[\varpi \circ (id_{A}\otimes d_{M^{A}})\circ \sigma ^{-1}](\varphi
)+[\varpi \circ (id_{A}\otimes d_{M^{A}})\circ \sigma ^{-1}](\psi ) \\
&=&\delta _{M^{A}}^{A}(\varphi )+\delta _{M^{A}}^{A}(\psi )\text{.}
\end{eqnarray*}%
- For any $\varphi \in C^{\infty }(M^{A},A)$ and $a\in A$, we have%
\begin{eqnarray*}
\delta _{M^{A}}^{A}(a\cdot \varphi ) &=&[\varpi \circ (id_{A}\otimes
d_{M^{A}})\circ \sigma ^{-1}](a\cdot \varphi ) \\
&=&[\varpi \circ (id_{A}\otimes d_{M^{A}})](\sigma ^{-1}(a\cdot \varphi )) \\
&=&[\varpi \circ (id_{A}\otimes d_{M^{A}})](a\cdot \sigma ^{-1}(\varphi )) \\
&=&a\cdot \lbrack \varpi \circ (id_{A}\otimes d_{M^{A}})](\sigma
^{-1}(\varphi )) \\
&=&a\cdot \delta _{M^{A}}^{A}(\varphi )\text{.}
\end{eqnarray*}%
- For any $\varphi ,\psi \in C^{\infty }(M^{A},A)$, we have%
\begin{eqnarray*}
\delta _{M^{A}}^{A}(\varphi \cdot \psi ) &=&[\varpi \circ (id_{A}\otimes
d_{M^{A}})\circ \sigma ^{-1}](\varphi \cdot \psi ) \\
&=&[\varpi \circ (id_{A}\otimes d_{M^{A}})](\sigma ^{-1}(\varphi \cdot \psi
)) \\
&=&[\varpi \circ (id_{A}\otimes d_{M^{A}})](\sigma ^{-1}(\varphi )\cdot
\sigma ^{-1}(\psi )) \\
&=&[\varpi \circ (id_{A}\otimes d_{M^{A}})](\sigma ^{-1}(\varphi ))\cdot
\psi +\varphi \cdot \lbrack \varpi \circ (id_{A}\otimes d_{M^{A}})](\sigma
^{-1}(\psi )) \\
&=&[\varpi \circ (id_{A}\otimes d_{M^{A}})\circ \sigma ^{-1}](\varphi )\cdot
\psi +\varphi \cdot \lbrack \varpi \circ (id_{A}\otimes d_{M^{A}})\circ
\sigma ^{-1}](\psi ) \\
&=&[\varpi \circ (id_{A}\otimes d_{M^{A}})\circ \sigma ^{-1}](\varphi
)+[\varpi \circ (id_{A}\otimes d_{M^{A}})\circ \sigma ^{-1}](\psi ) \\
&=&\delta _{M^{A}}^{A}(\varphi )\cdot \psi +\varphi \cdot \delta
_{M^{A}}^{A}(\psi )\text{.}
\end{eqnarray*}%
As%
\begin{equation*}
\delta _{M}:C^{\infty }(M)\longrightarrow \Omega _{\mathbb{R}}[C^{\infty
}(M)]
\end{equation*}%
is a derivation, then the map 
\begin{equation*}
C^{\infty }(M)\longrightarrow \Omega _{A}[C^{\infty }(M^{A},A)],f\longmapsto
\lbrack \delta _{M}(f)]^{A}
\end{equation*}%
is a derivation. Thus, for any $f\in C^{\infty }(M)$%
\begin{eqnarray*}
\delta _{M^{A}}^{A}\left( f^{A}\right) &=&\varpi \circ \left( id_{A}\otimes
d_{M^{A}}\right) \circ \sigma ^{-1}(f^{A}) \\
&=&\sum\limits_{\alpha =1}^{\dim A}\overline{\left[ (a_{\alpha }^{\ast
}\circ f^{A})a_{\alpha }\otimes 1_{C^{\infty }(M^{A},A)}-1_{C^{\infty
}(M^{A},A)}\otimes (a_{\alpha }^{\ast }\circ f^{A})a_{\alpha }\right] } \\
&=&\overline{\sum\limits_{\alpha =1}^{\dim A}(a_{\alpha }^{\ast }\circ
f^{A})a_{\alpha }\otimes 1_{C^{\infty }(M^{A},A)}-1_{C^{\infty
}(M^{A},A)}\otimes \sum\limits_{\alpha =1}^{\dim A}(a_{\alpha }^{\ast }\circ
f^{A})a_{\alpha }} \\
&=&\overline{f^{A}\otimes 1_{C^{\infty }(M^{A},A)}-1_{C^{\infty
}(M^{A},A)}\otimes f^{A}} \\
&=&\left[ \overline{f\otimes 1_{C^{\infty }(M)}-1_{C^{\infty }(M)}\otimes f}%
\right] ^{A}
\end{eqnarray*}

i.e%
\begin{equation*}
\delta _{M^{A}}^{A}(f^{A})=[\delta _{M}(f)]^{A}\text{.}
\end{equation*}
\end{proof}

\begin{proposition}
The map 
\begin{equation*}
\Omega _{\mathbb{R}}[C^{\infty }(M)]\longrightarrow \Omega _{A}[C^{\infty
}(M^{A},A)],x\longmapsto x^{A}
\end{equation*}%
is an injective homomorphism of $\mathbb{R}$-modules.
\end{proposition}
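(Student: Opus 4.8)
The plan is to realize $x\mapsto x^{A}$ as the map furnished by the universal property of the K\"ahler module, which settles well-definedness and $\mathbb{R}$-linearity at once, and then to prove injectivity by producing an explicit left inverse built from the augmentation. For $x=\sum_{i}f_{i}\cdot\delta_{M}(g_{i})$ the intended value is $x^{A}=\sum_{i}f_{i}^{A}\cdot[\delta_{M}(g_{i})]^{A}=\sum_{i}f_{i}^{A}\cdot\delta_{M^{A}}^{A}(g_{i}^{A})$, using the preceding proposition. Viewing $\Omega_{A}[C^{\infty}(M^{A},A)]$ as a $C^{\infty}(M)$-module through $f\cdot\omega:=f^{A}\cdot\omega$, I would check that $f\mapsto\delta_{M^{A}}^{A}(f^{A})$ is a derivation of $C^{\infty}(M)$: additivity is immediate, and the Leibniz rule follows from $(fg)^{A}=f^{A}g^{A}$ together with the derivation property of $\delta_{M^{A}}^{A}$. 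The universal property of $\delta_{M}$ then yields a unique $C^{\infty}(M)$-linear extension $\Omega_{\mathbb{R}}[C^{\infty}(M)]\to\Omega_{A}[C^{\infty}(M^{A},A)]$, which is precisely $x\mapsto x^{A}$; it is independent of the chosen representation of $x$ and is in particular a homomorphism of $\mathbb{R}$-modules.

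Injectivity is the substantive point, and I would establish it by constructing a retraction. Let $\varepsilon:A\to\mathbb{R}$ be the augmentation, with induced projection $\pi_{M}:M^{A}\to M$ satisfying $\varepsilon\circ f^{A}=f\circ\pi_{M}=\pi_{M}^{\ast}f$, and let $s:M\to M^{A}$ be the canonical section $x\mapsto(f\mapsto f(x)\cdot 1_{A})$, so that $\pi_{M}\circ s=\mathrm{id}_{M}$. I endow $\Omega_{\mathbb{R}}[C^{\infty}(M^{A})]$ with a $C^{\infty}(M^{A},A)$-module structure via $\varphi\cdot\eta:=(\varepsilon\circ\varphi)\,\eta$; relative to it, the map $\varphi\mapsto d_{M^{A}}(\varepsilon\circ\varphi)$ is an $A$-linear derivation of $C^{\infty}(M^{A},A)$, so the universal property of $\delta_{M^{A}}^{A}$ produces a $C^{\infty}(M^{A},A)$-linear map $\rho:\Omega_{A}[C^{\infty}(M^{A},A)]\to\Omega_{\mathbb{R}}[C^{\infty}(M^{A})]$ determined by $\rho(\delta_{M^{A}}^{A}\varphi)=d_{M^{A}}(\varepsilon\circ\varphi)$.

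Applying $\rho$ to $x^{A}$ for $x=\sum_{i}f_{i}\cdot\delta_{M}(g_{i})$, and using $C^{\infty}(M^{A},A)$-linearity together with $\varepsilon\circ f^{A}=\pi_{M}^{\ast}f$, gives
\[
\rho(x^{A})=\sum_{i}(\pi_{M}^{\ast}f_{i})\,d_{M^{A}}(\pi_{M}^{\ast}g_{i})=\pi_{M}^{\ast}(x),
\]
where $\pi_{M}^{\ast}:\Omega_{\mathbb{R}}[C^{\infty}(M)]\to\Omega_{\mathbb{R}}[C^{\infty}(M^{A})]$ is the functorial map attached to $\pi_{M}$. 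Since $\pi_{M}\circ s=\mathrm{id}_{M}$, functoriality of the K\"ahler construction gives $s^{\ast}\circ\pi_{M}^{\ast}=\mathrm{id}$ on $\Omega_{\mathbb{R}}[C^{\infty}(M)]$, so $s^{\ast}\circ\rho$ is a left inverse of $x\mapsto x^{A}$ and injectivity follows. I expect the main obstacle to be exactly this injectivity step, and within it the careful bookkeeping of module structures: one must twist the $C^{\infty}(M^{A},A)$-action on $\Omega_{\mathbb{R}}[C^{\infty}(M^{A})]$ by $\varepsilon$ and verify genuine $A$-linearity and the Leibniz rule for $\varphi\mapsto d_{M^{A}}(\varepsilon\circ\varphi)$ before the universal property can deliver $\rho$; once $\rho$ exists, the identities $\rho\circ(-)^{A}=\pi_{M}^{\ast}$ and $s^{\ast}\circ\pi_{M}^{\ast}=\mathrm{id}$ are routine consequences of functoriality and $\varepsilon\circ f^{A}=f\circ\pi_{M}$.
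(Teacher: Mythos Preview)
Your argument is correct and, in fact, supplies substantially more than the paper does. Invoking the universal property of $\delta_{M}$ to obtain the map $x\mapsto x^{A}$ is the clean way to establish well-definedness and $C^{\infty}(M)$-linearity (hence $\mathbb{R}$-linearity) simultaneously; your verification that $f\mapsto\delta_{M^{A}}^{A}(f^{A})$ is a derivation into $\Omega_{A}[C^{\infty}(M^{A},A)]$ with the pulled-back module structure is exactly what is needed. Your injectivity argument via the retraction $s^{\ast}\circ\rho$ is also sound: twisting the $C^{\infty}(M^{A},A)$-action on $\Omega_{\mathbb{R}}[C^{\infty}(M^{A})]$ by the augmentation $\varepsilon$ makes $\varphi\mapsto d_{M^{A}}(\varepsilon\circ\varphi)$ an $A$-linear derivation (since $\varepsilon$ is an algebra homomorphism), the universal property of $\delta_{M^{A}}^{A}$ then yields $\rho$, and the identities $\varepsilon\circ f^{A}=\pi_{M}^{\ast}f$ and $\pi_{M}\circ s=\mathrm{id}_{M}$ finish the job.

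By contrast, the paper's own proof is minimal: it writes $x=\sum_{i}f_{i}\cdot\delta_{M}(f_{i}')$ and checks only additivity $(x+y)^{A}=x^{A}+y^{A}$ and compatibility with real scalars $(\lambda\cdot x)^{A}=\lambda\cdot x^{A}$ by direct computation on representatives. It does not treat well-definedness of $x\mapsto x^{A}$ as a map out of $\Omega_{\mathbb{R}}[C^{\infty}(M)]$ (independence from the chosen presentation of $x$), nor does it give any argument for injectivity. Your use of universal properties handles well-definedness without having to manipulate relations in $J/J^{2}$, and your construction of an explicit left inverse furnishes the injectivity that the paper leaves unproved. The paper's approach is shorter to write down but strictly weaker in content; yours actually establishes the full proposition.
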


\begin{proof}
Let%
\begin{equation*}
\Psi :\Omega _{\mathbb{R}}[C^{\infty }(M)]\longrightarrow \Omega
_{A}[C^{\infty }(M^{A},A)],x\longmapsto x^{A}
\end{equation*}%
be that map.
\end{proof}

For any $x,y\in \Omega _{\mathbb{R}}[C^{\infty }(M)]$,%
\begin{eqnarray*}
\Psi (x+y) &=&(x+y)^{A} \\
&=&(\sum\limits_{i\in I:finite}f_{i}\cdot \delta _{M}(f_{i}^{\prime
})+\sum\limits_{j\in I:finite}g_{j}\cdot \delta _{M}(g_{j}^{\prime }))^{A} \\
&=&(\sum\limits_{i\in I:finite}f_{i}\cdot \delta _{M}(f_{i}^{\prime
}))^{A}+(\sum\limits_{j\in I:finite}g_{j}\cdot \delta _{M}(g_{j}^{\prime
}))^{A} \\
&=&x^{A}+y^{A}\text{.}
\end{eqnarray*}%
For any $x\in \Omega _{\mathbb{R}}[C^{\infty }(M)]$ and for $\lambda \in 
\mathbb{R}$,%
\begin{eqnarray*}
\Psi (\lambda \cdot x) &=&(\lambda \cdot x)^{A} \\
&=&(\lambda \cdot \sum\limits_{i\in I:finite}f_{i}\cdot \delta
_{M}(f_{i}^{\prime }))^{A} \\
&=&\lambda \cdot (\sum\limits_{i\in I:finite}f_{i}\cdot \delta
_{M}(f_{i}^{\prime }))^{A} \\
&=&\lambda \cdot x^{A}\text{.}
\end{eqnarray*}%
The pair $(\Omega _{A}[C^{\infty }(M^{A},A)],\delta _{M^{A}}^{A})$ satisfies
the following universal property: for every $C^{\infty }(M^{A},A)$-module $E$
and every $A$-derivation%
\begin{equation*}
\Phi :C^{\infty }(M^{A},A)\longrightarrow E\text{,}
\end{equation*}%
there exists a unique $C^{\infty }(M^{A},A)$-linear map 
\begin{equation*}
\widetilde{\Phi }:\Omega _{A}[C^{\infty }(M^{A},A)]\longrightarrow E
\end{equation*}%
such that 
\begin{equation*}
\widetilde{\Phi }\circ \delta _{M^{A}}^{A}=\Phi \text{.}
\end{equation*}%
In other words, there exists a unique $\widetilde{\Phi }$ which makes the
following diagram commutative%
\begin{equation*}
\begin{array}{ccc}
\Omega _{A}[C^{\infty }(M^{A},A)] &  &  \\ 
\delta _{M^{A}}^{A}\uparrow & \text{ }\overset{\widetilde{\Phi }}{\searrow }
&  \\ 
C^{\infty }(M^{A},A) & \underset{\Phi }{\overset{}{\longrightarrow }} & E%
\text{.}%
\end{array}%
\newline
\end{equation*}%
This fact implies the existence of a natural isomorphism of $C^{\infty
}(M^{A},A)$-modules%
\begin{equation*}
Hom_{C^{\infty }(M^{A},A)}(\Omega _{A}[C^{\infty
}(M^{A},A)],E)\longrightarrow Der_{A}[C^{\infty }(M^{A},A)],E),\psi
\longmapsto \psi \circ \delta _{M^{A}}^{A}\text{.}
\end{equation*}%
In particular, if $E=C^{\infty }(M^{A},A)$, we have 
\begin{eqnarray*}
\Omega _{A}[C^{\infty }(M^{A},A)]^{\ast } &\simeq &Der_{A}[C^{\infty
}(M^{A},A)] \\
&=&\mathfrak{X}(M^{A})\text{.}
\end{eqnarray*}

For any $p\in \mathbb{N}$, $\Lambda ^{p}(\Omega _{A}[C^{\infty }(M^{A},A)])=%
\mathfrak{L}_{sks}^{p}(\Omega _{A}[C^{\infty }(M^{A},A)],C^{\infty
}(M^{A},A))$ denotes the $C^{\infty }(M^{A},A)$-module of skew-symmetric
multilinear forms of degree $p$ from $\Omega _{A}[C^{\infty }(M^{A},A)]$
into $C^{\infty }(M^{A},A)$ and

\begin{equation*}
\Lambda (\Omega _{A}[C^{\infty }(M^{A},A)])=\bigoplus\limits_{p\in \mathbb{N}%
}\Lambda ^{p}(\Omega _{A}[C^{\infty }(M^{A},A)])
\end{equation*}%
the exterior $C^{\infty }(M^{A},A)$-algebra of $\Omega _{A}[C^{\infty
}(M^{A},A)]$.

\begin{equation*}
\Lambda ^{0}(\Omega _{A}[C^{\infty }(M^{A},A)])=C^{\infty }(M^{A},A)\text{,}
\end{equation*}%
\begin{equation*}
\Lambda ^{1}(\Omega _{A}[C^{\infty }(M^{A},A)])=\Omega _{A}[C^{\infty
}(M^{A},A)]^{\ast }\text{.}
\end{equation*}

We denote, 
\begin{equation*}
\delta _{M^{A}}^{A}=\delta _{M^{A}}:\Lambda (\Omega _{A}[C^{\infty
}(M^{A},A)])\longrightarrow \Lambda (\Omega _{A}[C^{\infty }(M^{A},A)])
\end{equation*}%
a unique derivation, of degree $+1$, which extends the canonical derivation 
\begin{equation*}
\delta _{M^{A}}^{0}:C^{\infty }(M^{A},A)\longrightarrow \Omega
_{A}[C^{\infty }(M^{A},A)]\text{.}
\end{equation*}

For any $\varphi ,\psi ,\psi _{1},\psi _{2},...,\psi _{p}\in C^{\infty
}(M^{A},A)$ and $\omega \in \Omega _{A}[C^{\infty }(M^{A},A)]^{\ast }$, we
get

1.%
\begin{equation*}
\delta _{M^{A}}(\varphi \cdot \delta _{M^{A}}(\psi _{1})\wedge ...\wedge
\delta _{M^{A}}(\psi _{p})=\delta _{M^{A}}(\varphi )\wedge \delta
_{M^{A}}(\psi _{1})\wedge ...\wedge \delta _{M^{A}}(\psi _{p})\text{.}
\end{equation*}

2.%
\begin{equation*}
\delta _{M^{A}}^{1}[\psi \cdot \delta _{M^{A}}^{0}(\varphi )]=\delta
_{M^{A}}^{0}(\psi )\wedge \delta _{M^{A}}^{0}(\varphi )\text{.}
\end{equation*}

3.%
\begin{equation*}
\delta _{M^{A}}^{1}(\varphi \cdot \omega )=\delta _{M^{A}}^{0}(\varphi
)\wedge \omega +\varphi \cdot \delta _{M^{A}}^{1}(\omega )\text{.}
\end{equation*}

\begin{proposition}
If $\eta \in \Lambda ^{p}(\Omega _{\mathbb{R}}[C^{\infty }(M)]),$ then $\eta
^{A}\in \Lambda ^{p}(\Omega _{A}[C^{\infty }(M^{A},A)])$.
\end{proposition}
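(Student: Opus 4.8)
The plan is to define the prolongation $\eta^{A}$ on the generators of the exterior algebra and then to read off the three required properties (degree $p$, skew-symmetry, and $A$-multilinearity with values in $C^{\infty}(M^{A},A)$) directly from the construction. Since the image of $\delta_{M}$ generates the $C^{\infty}(M)$-module $\Omega_{\mathbb{R}}[C^{\infty}(M)]$, every $\eta\in\Lambda^{p}(\Omega_{\mathbb{R}}[C^{\infty}(M)])$ is a finite sum of decomposable forms
\[
\eta=\sum_{i\in I:finite}f_{i}\cdot\delta_{M}(g_{i,1})\wedge\cdots\wedge\delta_{M}(g_{i,p}),\qquad f_{i},g_{i,k}\in C^{\infty}(M).
\]
First I would prolong each term, setting
\[
\eta^{A}=\sum_{i\in I:finite}f_{i}^{A}\cdot\delta_{M^{A}}^{A}(g_{i,1}^{A})\wedge\cdots\wedge\delta_{M^{A}}^{A}(g_{i,p}^{A}),
\]
where $f\mapsto f^{A}$ is the algebra homomorphism on functions and each factor is computed via the identity $\delta_{M^{A}}^{A}(g^{A})=[\delta_{M}(g)]^{A}$ of the preceding proposition.

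Granting that this is well defined, membership in $\Lambda^{p}(\Omega_{A}[C^{\infty}(M^{A},A)])$ is immediate. The degree is exactly $p$ because each summand is a wedge of precisely $p$ elements $\delta_{M^{A}}^{A}(g_{i,k}^{A})$ of $\Omega_{A}[C^{\infty}(M^{A},A)]$, scaled by $f_{i}^{A}\in C^{\infty}(M^{A},A)$; skew-symmetry is inherited from the wedge product of the exterior $C^{\infty}(M^{A},A)$-algebra $\Lambda(\Omega_{A}[C^{\infty}(M^{A},A)])$; and $A$-multilinearity with values in $C^{\infty}(M^{A},A)$ holds because the factors live in the $A$-module $\Omega_{A}[C^{\infty}(M^{A},A)]$ while the coefficients $f_{i}^{A}$ lie in the $A$-algebra $C^{\infty}(M^{A},A)$. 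Hence $\eta^{A}$ is a skew-symmetric $A$-$p$-linear form into $C^{\infty}(M^{A},A)$, as required.

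The step I expect to be the main obstacle is the \emph{well-definedness} of $\eta\mapsto\eta^{A}$, that is, its independence of the chosen decomposition of $\eta$. I would handle this by checking that the termwise prolongation respects every relation holding in $\Lambda^{p}(\Omega_{\mathbb{R}}[C^{\infty}(M)])$: additivity and $\mathbb{R}$-linearity, the $C^{\infty}(M)$-module relations, the Leibniz rule of $\delta_{M}$, and the skew-symmetry of the wedge. Each such relation is preserved under prolongation because $f\mapsto f^{A}$ satisfies $(f+g)^{A}=f^{A}+g^{A}$, $(\lambda\cdot f)^{A}=\lambda\cdot f^{A}$ and $(f\cdot g)^{A}=f^{A}\cdot g^{A}$, because $x\mapsto x^{A}$ is the injective homomorphism of $\mathbb{R}$-modules on $\Omega_{\mathbb{R}}[C^{\infty}(M)]$ established in the previous proposition, and because $\delta_{M^{A}}^{A}$ is an $A$-linear derivation satisfying $\delta_{M^{A}}^{A}(g^{A})=[\delta_{M}(g)]^{A}$. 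Consequently a vanishing finite combination of decomposables in $\Omega_{\mathbb{R}}[C^{\infty}(M)]$ prolongs to a vanishing combination in $\Omega_{A}[C^{\infty}(M^{A},A)]$, so $\eta^{A}$ does not depend on the representation of $\eta$. An equivalent route, which I would keep in reserve, is to define $\eta^{A}$ through its values by $\eta^{A}(x_{1}^{A},\dots,x_{p}^{A})=[\eta(x_{1},\dots,x_{p})]^{A}$ and to extend it $C^{\infty}(M^{A},A)$-multilinearly, using that $\Omega_{A}[C^{\infty}(M^{A},A)]$ is generated over $C^{\infty}(M^{A},A)$ by the prolonged differentials $[\delta_{M}(g)]^{A}$; in that formulation the crux becomes the consistency of the definition with the Leibniz relations defining $\Omega_{A}[C^{\infty}(M^{A},A)]$.
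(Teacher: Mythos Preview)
Your proposal is correct and follows essentially the same approach as the paper: write $\eta$ in terms of decomposable generators $f\cdot\delta_{M}(g_{1})\wedge\cdots\wedge\delta_{M}(g_{p})$ and prolong termwise using $\delta_{M^{A}}^{A}(g^{A})=[\delta_{M}(g)]^{A}$ together with the multiplicativity of $f\mapsto f^{A}$. You are in fact more careful than the paper's own argument, which treats only a single decomposable and does not comment on well-definedness.
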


\begin{proof}
Indeed, for any $\eta \in \Lambda ^{p}(\Omega _{\mathbb{R}}[C^{\infty
}(M)]), $ $\eta $\ is of the form $\delta _{M}(f_{1})\wedge ...\wedge \delta
_{M}(f_{p})$ with $f_{1},f_{2},...,f_{p}\in C^{\infty }(M).$%
\begin{eqnarray*}
\eta ^{A} &=&[\delta _{M}(f_{1})\wedge ...\wedge \delta _{M}(f_{p})]^{A} \\
&=&[\delta _{M}(f_{1})]^{A}\wedge ...\wedge \lbrack \delta _{M}(f_{p})]^{A}
\\
&=&\delta _{M^{A}}^{0}(f_{1}^{A})\wedge ...\wedge \delta
_{M^{A}}^{0}(f_{p}^{A})\text{.}
\end{eqnarray*}%
Thus, the $C^{\infty }(M^{A},A)$-module $\Lambda ^{p}(\Omega _{A}[C^{\infty
}(M^{A},A)])$ is generated by elements of the form 
\begin{equation*}
\eta ^{A}=\delta _{M^{A}}^{0}(\varphi _{1})\wedge ...\wedge \delta
_{M^{A}}^{0}(\varphi _{p})
\end{equation*}%
with $\varphi _{1}=f_{1}^{A},...,\varphi _{p}=f_{p}^{A}\in C^{\infty
}(M^{A},A)$.
\end{proof}

The algebra%
\begin{equation*}
\Lambda (\Omega _{A}[C^{\infty }(M^{A},A)])=\bigoplus\limits_{p\in \mathbb{N}%
}\Lambda ^{p}(\Omega _{A}[C^{\infty }(M^{A},A)])
\end{equation*}%
is the algebra of K\"{a}hler forms on $C^{\infty }(M^{A},A)$.

The pair $(\Lambda (\Omega _{A}[C^{\infty }(M^{A},A)],\delta _{M^{A}}^{A}))$
is a differential complex and the map 
\begin{equation*}
A\times \Omega _{\mathbb{R}}[C^{\infty }(M)]\longmapsto \Omega
_{A}[C^{\infty }(M^{A},A)],(a,x)\longmapsto a\cdot x^{A}
\end{equation*}%
induces the morphism of the differential complex $(A\bigotimes \Lambda
(\Omega _{\mathbb{R}}[C^{\infty }(M)]),id_{A}\otimes \delta _{M}))$\ into
the differential

complex $(\Lambda (\Omega _{A}[C^{\infty }(M^{A},A)]),\delta _{M^{A}}^{A})$.

\section{\protect\bigskip Lie derivative with respect to a derivation on $%
M^{A}$}

Let 
\begin{equation*}
\theta :C^{\infty }(M)\longrightarrow C^{\infty }(M)
\end{equation*}%
be a derivation and%
\begin{equation*}
\sigma _{\theta }:\left[ \Omega _{\mathbb{R}}[C^{\infty }(M)]\right]
^{p}\longrightarrow \Lambda ^{p}(\Omega _{\mathbb{R}}[C^{\infty }(M)])\text{,%
}
\end{equation*}%
be the $C^{\infty }(M)$-skew-symmetric multilinear map such that for any $%
x_{1},x_{2},...,x_{p}\in \Omega _{\mathbb{R}}[C^{\infty }(M)]$, 
\begin{equation*}
\sigma _{\theta }(x_{1},x_{2},\cdot \cdot \cdot
,x_{p})=\sum\limits_{i=1}^{p}(-1)^{i-1}\widetilde{\theta }(x_{i})\cdot
x_{1}\wedge ...\wedge \widehat{x_{i}}\wedge ...\wedge x_{p}\text{,}
\end{equation*}
where 
\begin{equation*}
\widetilde{\theta }:\Omega _{\mathbb{R}}[C^{\infty }(M)]\longrightarrow
C^{\infty }(M)
\end{equation*}%
is a unique$C^{\infty }(M)$-linear map such that $\widetilde{\theta }\circ
\delta _{M}=\theta $. Then, 
\begin{equation*}
\sigma _{\theta ^{A}}^{A}:\left[ \Omega _{A}[C^{\infty }(M^{A},A)]\right]
^{p}\longrightarrow \Lambda ^{p}(\Omega _{A}[C^{\infty }(M^{A},A)])
\end{equation*}%
is a unique $C^{\infty }(M^{A},A)$-skew-symmetric multilinear map such that%
\begin{equation*}
\sigma _{\theta ^{A}}^{A}(x_{1}^{A},x_{2}^{A},...,x_{p}^{A})=[\sigma
_{\theta }(x_{1},x_{2},...,x_{p})]^{A}\text{.}
\end{equation*}

We denote%
\begin{equation*}
\widetilde{\sigma _{\theta ^{A}}^{A}}:\Lambda ^{p}(\Omega _{A}[C^{\infty
}(M^{A},A)]\longrightarrow \Lambda ^{p-1}(\Omega _{A}[C^{\infty }(M^{A},A)])%
\text{,}
\end{equation*}%
the unique $C^{\infty }(M^{A},A)$-skew-symmetric multilinear map such that

\begin{equation*}
\widetilde{\sigma _{\theta ^{A}}^{A}}(x_{1}^{A}\wedge x_{2}^{A}\wedge \cdot
\cdot \cdot \wedge x_{p}^{A})=\sigma _{\theta
^{A}}^{A}(x_{1}^{A},x_{2}^{A},\cdot \cdot \cdot ,x_{p}^{A})
\end{equation*}%
i.e. $\sigma _{\theta ^{A}}^{A}$ induces a derivation 
\begin{equation*}
i_{\theta ^{A}}=\widetilde{\sigma _{\theta ^{A}}^{A}}:\Lambda (\Omega
_{A}[C^{\infty }(M^{A},A)])\longrightarrow \Lambda (\Omega _{A}[C^{\infty
}(M^{A},A)])
\end{equation*}%
of degree $-1$.

\begin{proposition}
For any $\theta \in Der_{\mathbb{R}}[C^{\infty }(M)]$ and for any $\eta \in
\Lambda ^{p}(\Omega _{\mathbb{R}}[C^{\infty }(M)])$, we have 
\begin{equation*}
i_{\theta ^{A}}(\eta ^{A})=[i_{\theta }(\eta )]^{A}\text{.}
\end{equation*}
\end{proposition}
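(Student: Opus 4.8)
The plan is to prove the identity $i_{\theta^A}(\eta^A) = [i_\theta(\eta)]^A$ by reducing to generators and exploiting the defining properties of $i_{\theta^A}$ and of the prolongation operation. The crucial observation is that the interior product $i_{\theta^A}$ is fully determined on the algebra $\Lambda(\Omega_A[C^\infty(M^A,A)])$ by its values on elements of the form $\delta_{M^A}^0(\varphi_1)\wedge\cdots\wedge\delta_{M^A}^0(\varphi_p)$, since Proposition (the one immediately preceding) establishes that such elements generate $\Lambda^p(\Omega_A[C^\infty(M^A,A)])$ as a $C^\infty(M^A,A)$-module. Likewise, by the same proposition every $\eta\in\Lambda^p(\Omega_{\mathbb{R}}[C^\infty(M)])$ is of the form $\delta_M(f_1)\wedge\cdots\wedge\delta_M(f_p)$ with $f_1,\dots,f_p\in C^\infty(M)$. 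So it suffices to verify the identity on a single such decomposable generator and then extend by $C^\infty(M^A,A)$-linearity.

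First I would fix $\eta = \delta_M(f_1)\wedge\cdots\wedge\delta_M(f_p)$ and recall that its prolongation unwinds as $\eta^A = \delta_{M^A}^0(f_1^A)\wedge\cdots\wedge\delta_{M^A}^0(f_p^A)$, using the computation $[\delta_M(f_i)]^A = \delta_{M^A}^0(f_i^A)$ already carried out in the proof of the earlier proposition on $\delta_{M^A}^A$. Next I would apply the definition of $i_{\theta^A} = \widetilde{\sigma_{\theta^A}^A}$ to this wedge, obtaining
\begin{equation*}
i_{\theta^A}(\eta^A) = \sigma_{\theta^A}^A\bigl([\delta_M(f_1)]^A, \dots, [\delta_M(f_p)]^A\bigr) = \sigma_{\theta^A}^A(x_1^A, \dots, x_p^A),
\end{equation*}
where I write $x_i = \delta_M(f_i)$. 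Here I would invoke the defining relation of the prolonged map $\sigma_{\theta^A}^A$, namely $\sigma_{\theta^A}^A(x_1^A,\dots,x_p^A) = [\sigma_\theta(x_1,\dots,x_p)]^A$, which is exactly the compatibility built into its construction. This immediately yields $i_{\theta^A}(\eta^A) = [\sigma_\theta(x_1,\dots,x_p)]^A$.

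To finish, I would compare this with $[i_\theta(\eta)]^A$. On the base manifold, the same formal relationship holds: $i_\theta(\eta) = \widetilde{\sigma_\theta}(x_1\wedge\cdots\wedge x_p) = \sigma_\theta(x_1,\dots,x_p)$, so that $[i_\theta(\eta)]^A = [\sigma_\theta(x_1,\dots,x_p)]^A$, matching the expression obtained above. The two sides therefore coincide on decomposable generators, and since both $i_{\theta^A}(\cdot)^A$ and $[i_\theta(\cdot)]^A$ are built to respect the module structure and the prolongation, extension to arbitrary $\eta$ follows by linearity over $C^\infty(M^A,A)$, together with the multiplicative compatibilities $(f\cdot g)^A = f^A\cdot g^A$ and $(\eta_1\wedge\eta_2)^A = \eta_1^A\wedge\eta_2^A$ recorded earlier.

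**The main obstacle** I anticipate is not the algebra of the single-generator computation — which is essentially a matter of chasing definitions — but rather making rigorous the passage from generators to the whole module. One must verify that the prolongation map $\eta\mapsto\eta^A$ is well defined on wedge products (independent of the representation of $\eta$ as a sum of decomposables) and that $i_{\theta^A}$ is genuinely a $C^\infty(M^A,A)$-derivation of degree $-1$, so that both sides of the claimed identity are additive and behave correctly under multiplication by functions $\varphi\in C^\infty(M^A,A)$. In particular, care is needed because a general element of $\Lambda^p(\Omega_{\mathbb{R}}[C^\infty(M)])$ is a \emph{sum} $\sum_k \varphi_k\cdot\delta_M(f_1^k)\wedge\cdots\wedge\delta_M(f_p^k)$, and one must confirm that the coefficient functions $\varphi_k$ prolong compatibly with the interior product through the Leibniz-type rule for $i_{\theta^A}$; once that coherence is checked, the identity propagates from generators to all of $\Lambda^p(\Omega_{\mathbb{R}}[C^\infty(M)])$.
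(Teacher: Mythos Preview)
Your proposal is correct and follows essentially the same approach as the paper: both reduce to a decomposable generator $\eta=\delta_M(f_1)\wedge\cdots\wedge\delta_M(f_p)$, unwind $\eta^A$ as a wedge of prolonged differentials, and then invoke the defining relation $\sigma_{\theta^A}^A(x_1^A,\dots,x_p^A)=[\sigma_\theta(x_1,\dots,x_p)]^A$ to conclude. The paper's proof stops at the single-generator computation without discussing the extension by linearity that you flag as an obstacle, so your treatment is in fact slightly more careful than the original.
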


\begin{proof}
If $\eta \in \Lambda ^{p}(\Omega _{\mathbb{R}}[C^{\infty }(M)])$, then there
exists $f_{1},f_{2},...,f_{p}\in C^{\infty }(M)$, such that $\eta =\delta
_{M}(f_{1})\wedge ...\wedge \delta _{M}(f_{p})$. Thus, 
\begin{eqnarray*}
i_{\theta ^{A}}(\eta ^{A}) &=&i_{\theta ^{A}}(\left[ \delta
_{M}(f_{1})\wedge ...\wedge \delta _{M}(f_{p})\right] ^{A}) \\
&=&i_{\theta ^{A}}([\delta _{M}(f_{1})]^{A}\wedge ...\wedge \lbrack \delta
_{M}(f_{p})]^{A}) \\
&=&\sigma _{\theta ^{A}}^{A}([\delta _{M}(f_{1})]^{A},...,[\delta
_{M}(f_{p})]^{A}) \\
&=&[\sigma _{\theta }(\delta _{M}(f_{1}),...,\delta _{M}(f_{p}))]^{A} \\
&=&[i_{\theta }(\delta _{M}(f_{1})\wedge ...\wedge \delta _{M}(f_{p})]^{A})
\\
&=&[i_{\theta }(\eta )]^{A}\text{.}
\end{eqnarray*}%
For $p=1$, we have%
\begin{equation*}
i_{\theta ^{A}}=\widetilde{\sigma _{\theta ^{A}}^{A}}:\Lambda ^{1}(\Omega
_{A}[C^{\infty }(M^{A},A)])=\Omega _{A}[C^{\infty }(M^{A},A)]^{\ast
}\longrightarrow \Lambda ^{0}(\Omega _{A}[C^{\infty }(M^{A},A)])=C^{\infty
}(M^{A},A)\text{,}
\end{equation*}%
and for any $y\in \Omega _{\mathbb{R}}[C^{\infty }(M)]$, 
\begin{equation*}
\ i_{\theta ^{A}}(y^{A})=\widetilde{\theta ^{A}}(y^{A})\text{.}
\end{equation*}%
For $p=2$, we have%
\begin{equation*}
\sigma _{\theta ^{A}}^{A}:\Omega _{A}[C^{\infty }(M^{A},A)]\times \Omega
_{A}[C^{\infty }(M^{A},A)]\longrightarrow C^{\infty }(M^{A},A)
\end{equation*}%
and for any $x,y\in \Omega _{\mathbb{R}}[C^{\infty }(M)]$, 
\begin{equation*}
\sigma _{\theta ^{A}}^{A}(x^{A},y^{A})=\widetilde{\theta ^{A}}(x^{A})\cdot
y^{A}-\widetilde{\theta ^{A}}(y^{A})\cdot x^{A}\text{.}
\end{equation*}%
Thus, the map%
\begin{equation*}
i_{\theta ^{A}}:\Lambda ^{2}(\Omega _{A}[C^{\infty
}(M^{A},A)])\longrightarrow \Omega _{A}[C^{\infty }(M^{A},A)]^{\ast }
\end{equation*}%
is the unique $C^{\infty }(M^{A},A)$-linear map such that 
\begin{equation*}
i_{\theta ^{A}}(x^{A}\wedge y^{A})=\sigma _{\theta ^{A}}^{A}(x^{A},y^{A})=%
\widetilde{\theta ^{A}}(x^{A})\cdot y^{A}-\widetilde{\theta ^{A}}%
(y^{A})\cdot x^{A}\text{.}
\end{equation*}
\end{proof}

\begin{definition}
The Lie derivative with respect to $D\in Der_{A}[C^{\infty }(M^{A},A)]$ is
the derivation of degree $0$ 
\begin{equation*}
\mathfrak{L}_{D}=i_{D}\circ \delta _{M^{A}}^{A}+\delta _{M^{A}}^{A}\circ
i_{D}:\Lambda (\Omega _{A}[C^{\infty }(M^{A},A)])\longrightarrow \Lambda
(\Omega _{A}[C^{\infty }(M^{A},A)])\text{.}
\end{equation*}
\end{definition}

\begin{proposition}
For any $\theta \in \mathfrak{X}(M)$, the map%
\begin{equation*}
\mathfrak{L}_{\theta ^{A}}:\Lambda (\Omega _{A}[C^{\infty
}(M^{A},A)])\longrightarrow \Lambda (\Omega _{A}[C^{\infty }(M^{A},A)])
\end{equation*}%
is a unique $A$-linear derivation such that 
\begin{equation*}
\mathfrak{L}_{\theta ^{A}}(\eta ^{A})=[\mathfrak{L}_{\theta }(\eta )]^{A}%
\text{,}
\end{equation*}%
for any $\eta \in \Lambda (\Omega _{\mathbb{R}}[C^{\infty }(M)])$.
\end{proposition}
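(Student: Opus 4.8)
The plan is to establish three things in turn: that $\mathfrak{L}_{\theta ^{A}}$ is an $A$-linear derivation of degree $0$; that it satisfies the prolongation identity $\mathfrak{L}_{\theta ^{A}}(\eta ^{A})=[\mathfrak{L}_{\theta }(\eta )]^{A}$; and that these two properties determine it uniquely. The tools I would use are already in place: the Cartan-type definition $\mathfrak{L}_{\theta ^{A}}=i_{\theta ^{A}}\circ \delta _{M^{A}}^{A}+\delta _{M^{A}}^{A}\circ i_{\theta ^{A}}$, the compatibility $i_{\theta ^{A}}(\eta ^{A})=[i_{\theta }(\eta )]^{A}$ proved above, the fact recorded at the end of Section 2 that $x\longmapsto x^{A}$ is a morphism of differential complexes (hence $\delta _{M^{A}}^{A}(\eta ^{A})=[\delta _{M}(\eta )]^{A}$), and the structural description $\mathfrak{X}(M^{A})=Der_{A}[C^{\infty }(M^{A},A)]$ from the first theorem.

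For the first two points I would argue formally. Since $i_{\theta ^{A}}$ is a graded derivation of degree $-1$ and $\delta _{M^{A}}^{A}$ one of degree $+1$, their graded commutator $i_{\theta ^{A}}\circ \delta _{M^{A}}^{A}+\delta _{M^{A}}^{A}\circ i_{\theta ^{A}}$ is an ordinary derivation of degree $0$ on $\Lambda (\Omega _{A}[C^{\infty }(M^{A},A)])$; it is $A$-linear because both $i_{\theta ^{A}}$ and $\delta _{M^{A}}^{A}$ are. The identity then follows by substitution, using the analogous Cartan formula $\mathfrak{L}_{\theta }=i_{\theta }\circ \delta _{M}+\delta _{M}\circ i_{\theta }$ on $M$ together with the two compatibility relations:
\begin{align*}
\mathfrak{L}_{\theta ^{A}}(\eta ^{A})&=i_{\theta ^{A}}(\delta _{M^{A}}^{A}(\eta ^{A}))+\delta _{M^{A}}^{A}(i_{\theta ^{A}}(\eta ^{A}))\\
&=i_{\theta ^{A}}([\delta _{M}(\eta )]^{A})+\delta _{M^{A}}^{A}([i_{\theta }(\eta )]^{A})\\
&=[i_{\theta }(\delta _{M}(\eta ))]^{A}+[\delta _{M}(i_{\theta }(\eta ))]^{A}=[\mathfrak{L}_{\theta }(\eta )]^{A},
\end{align*}
where the additivity of $(\cdot )^{A}$ is used in the last step.

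The delicate part is uniqueness, and I expect it to be the main obstacle. The prescribed relation only pins down $\mathfrak{L}_{\theta ^{A}}$ on the prolonged elements $\eta ^{A}$, which do not span $\Lambda (\Omega _{A}[C^{\infty }(M^{A},A)])$ over $A$, so I cannot conclude by linearity alone. Instead I would use that a degree-$0$ derivation is completely determined by its restriction to $\Lambda ^{0}=C^{\infty }(M^{A},A)$ and to a $C^{\infty }(M^{A},A)$-generating set of $\Lambda ^{1}$. By the generation proposition, $\Lambda ^{1}$ is generated by the elements $\delta _{M^{A}}^{0}(f^{A})=[\delta _{M}(f)]^{A}$, on which the relation (with $\eta =\delta _{M}(f)$) forces the value $[\mathfrak{L}_{\theta }(\delta _{M}(f))]^{A}$. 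On $\Lambda ^{0}$, however, the relation yields only the values on the functions $f^{A}$, namely $[\theta (f)]^{A}$; to upgrade this to all of $C^{\infty }(M^{A},A)$ I would invoke item $4$ of the first theorem, according to which an $A$-linear derivation of $C^{\infty }(M^{A},A)$ is entirely determined by its values on $\{f^{A}:f\in C^{\infty }(M)\}$. Thus any degree-$0$, $A$-linear derivation satisfying the identity agrees with $\mathfrak{L}_{\theta ^{A}}$ on $\Lambda ^{0}$ and on the generators of $\Lambda ^{1}$, hence everywhere, which gives uniqueness.
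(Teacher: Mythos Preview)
Your proof is correct, and your computation of the prolongation identity via the Cartan formula is line-for-line the same as the paper's proof. The paper stops there: it only verifies $\mathfrak{L}_{\theta^{A}}(\eta^{A})=[\mathfrak{L}_{\theta}(\eta)]^{A}$ and neither checks the $A$-linear derivation property nor argues uniqueness, whereas you supply both---the derivation property from the graded-commutator observation, and uniqueness by reducing to $\Lambda^{0}$ (via item~4 of the first theorem) and to the generators $\delta_{M^{A}}^{0}(f^{A})$ of $\Lambda^{1}$ (via the generation proposition). So your approach coincides with the paper's on the part the paper actually proves, and is strictly more complete on the rest.
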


\begin{proof}
For any $\eta \in \Lambda (\Omega _{\mathbb{R}}[C^{\infty }(M)])$, we have%
\begin{eqnarray*}
\mathfrak{L}_{\theta ^{A}}(\eta ^{A}) &=&i_{\theta ^{A}}[\delta
_{M^{A}}^{A}(\eta ^{A})]+\delta _{M^{A}}^{A}[i_{\theta ^{A}}(\eta ^{A})] \\
&=&i_{\theta ^{A}}([\delta _{M}(\eta )]^{A})+\delta _{M^{A}}^{A}([i_{\theta
}(\eta )]^{A}) \\
&=&(i_{\theta }[\delta _{M}(\eta )])^{A}+(\delta _{M}[i_{\theta }(\eta
)])^{A} \\
&=&(i_{\theta }[\delta _{M}(\eta )]+\delta _{M}[i_{\theta }(\eta )])^{A} \\
&=&[\mathfrak{L}_{\theta }(\eta )]^{A}\text{.}
\end{eqnarray*}
\end{proof}

\begin{proposition}
For any $\theta \in \mathfrak{X}(M)$, for any $x\in \Omega _{\mathbb{R}%
}[C^{\infty }(M)]$ and for any $f\in C^{\infty }(M)$, we have

1.%
\begin{equation*}
\mathfrak{L}_{f^{A}\cdot \theta ^{A}}(x^{A})=[\mathfrak{L}_{f\cdot \theta
}(x)]^{A}\text{.}
\end{equation*}

2.%
\begin{equation*}
\mathfrak{L}_{\theta ^{A}}(f^{A}\cdot x^{A})=[\mathfrak{L}_{\theta }(f\cdot
x)]^{A}\text{.}
\end{equation*}

3.%
\begin{equation*}
\mathfrak{L}_{\theta ^{A}}\left[ \delta _{M^{A}}^{A}(f^{A})\right] =[%
\mathfrak{L}_{\theta }(\delta _{M}(f))]^{A}\text{.}
\end{equation*}
\end{proposition}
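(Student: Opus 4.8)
The plan is to derive all three identities from the master intertwining relation $\mathfrak{L}_{\theta ^{A}}(\eta ^{A})=[\mathfrak{L}_{\theta }(\eta )]^{A}$, which holds for every $\theta \in \mathfrak{X}(M)$ and every $\eta \in \Lambda (\Omega _{\mathbb{R}}[C^{\infty }(M)])$ (the Proposition proved just above), together with the multiplicativity of the prolongation on functions, $(f\cdot g)^{A}=f^{A}\cdot g^{A}$, and the already-established compatibilities $(f\cdot \theta )^{A}=f^{A}\cdot \theta ^{A}$ and $\delta _{M^{A}}^{A}(f^{A})=[\delta _{M}(f)]^{A}$. In each case the strategy is the same: rewrite the argument of the Lie derivative (or the derivation labelling it) as the prolongation of a single object living on $M$, and then invoke the master relation. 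The computations are routine once that reduction is made.

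For assertion 1, I would first note that $f\cdot \theta $ is itself a vector field on $M$, so the master relation applies verbatim to it and gives $\mathfrak{L}_{(f\cdot \theta )^{A}}(x^{A})=[\mathfrak{L}_{f\cdot \theta }(x)]^{A}$. By item 2 of the Proposition on $(f\cdot \theta )^{A}$ we have $(f\cdot \theta )^{A}=f^{A}\cdot \theta ^{A}$, and since the Lie derivative is defined for every $D\in Der_{A}[C^{\infty }(M^{A},A)]$, taking $D=f^{A}\cdot \theta ^{A}=(f\cdot \theta )^{A}$ converts the left-hand side into $\mathfrak{L}_{f^{A}\cdot \theta ^{A}}(x^{A})$, which is exactly the desired identity.

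For assertion 2 the preliminary point is the compatibility of prolongation with the module action, $(f\cdot x)^{A}=f^{A}\cdot x^{A}$. I would check this by writing a generic $x\in \Omega _{\mathbb{R}}[C^{\infty }(M)]$ as a finite sum $x=\sum_{i}f_{i}\cdot \delta _{M}(g_{i})$ (the image of $\delta _{M}$ generates the module), so that $f\cdot x=\sum_{i}(f\cdot f_{i})\cdot \delta _{M}(g_{i})$; applying the explicit form of the prolongation of a Kähler form together with $(f\cdot f_{i})^{A}=f^{A}\cdot f_{i}^{A}$ yields $(f\cdot x)^{A}=f^{A}\cdot x^{A}$. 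With this in hand, $f\cdot x$ is again an element of $\Omega _{\mathbb{R}}[C^{\infty }(M)]$, so the master relation applied to $\eta =f\cdot x$ reads $\mathfrak{L}_{\theta ^{A}}((f\cdot x)^{A})=[\mathfrak{L}_{\theta }(f\cdot x)]^{A}$; substituting $(f\cdot x)^{A}=f^{A}\cdot x^{A}$ on the left gives assertion 2. Finally, for assertion 3 I would use the Proposition giving $\delta _{M^{A}}^{A}(f^{A})=[\delta _{M}(f)]^{A}$ to rewrite the argument, and then apply the master relation to $\eta =\delta _{M}(f)\in \Omega _{\mathbb{R}}[C^{\infty }(M)]$, obtaining $\mathfrak{L}_{\theta ^{A}}([\delta _{M}(f)]^{A})=[\mathfrak{L}_{\theta }(\delta _{M}(f))]^{A}$, which is precisely the claim.

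All three verifications share the same mechanism and present no real difficulty once the master relation is in place. The only step requiring genuine care is the module compatibility $(f\cdot x)^{A}=f^{A}\cdot x^{A}$ needed for assertion 2, since it must be confirmed for non-decomposable $x$ and rests on the generation of $\Omega _{\mathbb{R}}[C^{\infty }(M)]$ by $\delta _{M}$ together with the $\mathbb{R}$-additivity of the prolongation; this is the point I expect to be the main (if modest) obstacle.
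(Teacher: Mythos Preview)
Your proposal is correct, and it takes a genuinely different route from the paper. The paper does not invoke the master relation $\mathfrak{L}_{\theta^{A}}(\eta^{A})=[\mathfrak{L}_{\theta}(\eta)]^{A}$ at all; instead, for each of the three assertions it expands the Cartan formula $\mathfrak{L}_{D}=i_{D}\circ\delta_{M^{A}}^{A}+\delta_{M^{A}}^{A}\circ i_{D}$ directly, pushes the prolongation $(\cdot)^{A}$ through each term step by step (using identities like $i_{\theta^{A}}([\delta_{M}(x)]^{A})=(i_{\theta}[\delta_{M}(x)])^{A}$ and the Leibniz rule for $\delta_{M^{A}}^{A}$), and then reassembles the Cartan formula on $M$. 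Your approach is more economical: you recognize that all three statements are immediate specializations of the preceding proposition once the inputs are rewritten as prolongations, via $(f\cdot\theta)^{A}=f^{A}\cdot\theta^{A}$, $(f\cdot x)^{A}=f^{A}\cdot x^{A}$, and $\delta_{M^{A}}^{A}(f^{A})=[\delta_{M}(f)]^{A}$. What your method buys is brevity and a clear conceptual picture (the proposition is a corollary, not an independent computation); what the paper's method buys is that each identity is verified from first principles without relying on the global form of the master relation, which also makes the argument a template for the next proposition on general $D\in Der_{A}[C^{\infty}(M^{A},A)]$, where no master relation is available.
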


\begin{proof}
For any $\theta \in \mathfrak{X}(M)$, for any $x\in \Omega _{\mathbb{R}%
}[C^{\infty }(M)]$ and for any $f\in C^{\infty }(M)$, we have

1. 
\begin{eqnarray*}
\mathfrak{L}_{f^{A}\cdot \theta ^{A}}(x^{A}) &=&i_{f^{A}\cdot \theta
^{A}}[\delta _{M^{A}}^{A}(x^{A})]+\delta _{M^{A}}^{A}[i_{f^{A}\cdot \theta
^{A}}(x^{A})] \\
&=&f^{A}\cdot i_{\theta ^{A}}[\delta _{M^{A}}^{A}(x^{A})]+\delta
_{M^{A}}^{A} \left[ f^{A}\cdot i_{\theta ^{A}}(x^{A})\right] \\
&=&f^{A}\cdot i_{\theta ^{A}}([\delta _{M}(x)]^{A})+\delta
_{M^{A}}^{A}(f^{A}\cdot \lbrack i_{\theta }(x)]^{A}) \\
&=&f^{A}\cdot i_{\theta ^{A}}([\delta _{M}(x)]^{A})+i_{\theta
^{A}}(x^{A})\cdot \delta _{M^{A}}^{A}(f^{A})+f^{A}\cdot \delta _{M^{A}}^{A} 
\left[ i_{\theta ^{A}}(x^{A})\right] \\
&=&f^{A}\cdot (i_{\theta }[\delta _{M}(x)])^{A}+[i_{\theta }(x)]^{A}\cdot
\lbrack \delta _{M}(f)]^{A}+f^{A}\cdot (\delta _{M}\left[ i_{\theta }(x)%
\right] )^{A} \\
&=&(f\cdot i_{\theta }[\delta _{M}(x)]+i_{\theta }(x)\cdot \delta
_{M}(f)+f\cdot \delta _{M}\left[ i_{\theta }(x)\right] )^{A} \\
&=&(f\cdot i_{\theta }[\delta _{M}(x)]+\delta _{M}(f\cdot \lbrack i_{\theta
}(x)])^{A} \\
&=&(i_{f\cdot \theta }[\delta _{M}(x)]+\delta _{M}[i_{f\cdot \theta
}(x)])^{A} \\
&=&[\mathfrak{L}_{f\cdot \theta }(x)]^{A}\text{.}
\end{eqnarray*}%
Thus,%
\begin{equation*}
\mathfrak{L}_{f^{A}\cdot \theta ^{A}}(x^{A})=[\mathfrak{L}_{f\cdot \theta
}(x)]^{A}\text{.}
\end{equation*}

2.%
\begin{eqnarray*}
\mathfrak{L}_{\theta ^{A}}(f^{A}\cdot x^{A}) &=&i_{\theta ^{A}}\left[ \delta
_{M^{A}}^{A}(f^{A}\cdot x^{A})\right] +\delta _{M^{A}}^{A}\left[ i_{\theta
^{A}}(f^{A}\cdot x^{A})\right] \\
&=&i_{\theta ^{A}}\left[ \delta _{M^{A}}^{A}(f^{A})\Lambda x^{A}+f^{A}\cdot
\delta _{M^{A}}^{A}(x^{A})\right] +\delta _{M^{A}}^{A}\left[ f^{A}\cdot
i_{\theta ^{A}}(x^{A})\right] \\
&=&\theta ^{A}(f^{A})\cdot x^{A}-\delta _{M^{A}}^{A}(f^{A})\cdot \widetilde{%
\theta ^{A}}(x^{A})+f^{A}\cdot \theta ^{A}(x^{A}) \\
&&+\delta _{M^{A}}^{A}(f^{A})\cdot \widetilde{\theta ^{A}}(x^{A})+f^{A}\cdot
\delta _{M^{A}}^{A}\left[ i_{\theta ^{A}}(x^{A})\right] \\
&=&(\theta (f)\cdot x-\delta _{M}(f)\cdot \widetilde{\theta }(x)+f\cdot
\theta (x) \\
&&+\delta _{M}(f)\cdot \widetilde{\theta }(x)+f\cdot \delta _{M}\left[
i_{\theta }(x)\right] )^{A} \\
&=&(i_{\theta }\left[ \delta _{M}(f)\Lambda x+f\cdot \delta _{M}(x)\right]
+\delta _{M}\left[ f\cdot i_{\theta }(x)\right] )^{A} \\
&=&(i_{\theta }\left[ \delta _{M}(f\cdot x)\right] +\delta _{M}\left[
i_{\theta }(f\cdot x)\right] )^{A} \\
&=&[\mathfrak{L}_{\theta }(f\cdot x)]^{A}\text{.}
\end{eqnarray*}

3.%
\begin{eqnarray*}
\mathfrak{L}_{\theta ^{A}}\left[ \delta _{M^{A}}^{A}(f^{A})\right]
&=&i_{\theta ^{A}}\left[ \delta _{M^{A}}^{A}(\delta _{M^{A}}^{A}(f^{A}))%
\right] +\delta _{M^{A}}^{A}\left[ i_{\theta ^{A}}(\delta
_{M^{A}}^{A}(f^{A}))\right] \\
&=&0+\delta _{M^{A}}^{A}\left[ \theta ^{A}(f^{A})\right] \\
&=&\delta _{M^{A}}^{A}(\left[ \theta (f)\right] ^{A}) \\
&=&(\delta _{M}\left[ \theta (f)\right] )^{A} \\
&=&(0+\delta _{M}\left[ \theta (f)\right] )^{A} \\
&=&(i_{\theta }\left[ \delta _{M}(\delta _{M}(f))\right] +\delta _{M}\left[
i_{\theta }(\delta _{M}(f))\right] )^{A} \\
&=&[\mathfrak{L}_{\theta }(\delta _{M}(f))]^{A}\text{.}
\end{eqnarray*}
\end{proof}

\begin{proposition}
For any $D\in Der_{A}[C^{\infty }(M^{A},A)]$, $X\in \Omega _{A}[C^{\infty
}(M^{A},A)]$, and $\varphi \in C^{\infty }(M^{A},A)$, we have

1.%
\begin{equation*}
\mathfrak{L}_{\varphi \cdot D}(X)=\varphi \cdot \mathfrak{L}_{D}(X)+%
\widetilde{D}(X)\cdot \delta _{M^{A}}^{A}(\varphi )\text{;}
\end{equation*}

2.%
\begin{equation*}
\mathfrak{L}_{D}(\varphi \cdot X)=D(\varphi )\cdot X+\varphi \cdot \mathfrak{%
L}_{D}(X)\text{;}
\end{equation*}

3.%
\begin{equation*}
\mathfrak{L}_{D}\left[ \delta _{M^{A}}^{A}(\varphi )\right] =\delta
_{M^{A}}^{A}\left[ D(\varphi )\right] \text{.}
\end{equation*}
\end{proposition}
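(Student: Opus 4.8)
The plan is to reduce all three identities to the Cartan formula $\mathfrak{L}_{D}=i_{D}\circ \delta _{M^{A}}^{A}+\delta _{M^{A}}^{A}\circ i_{D}$ together with structural facts already available: $\delta _{M^{A}}^{A}$ is a degree $+1$ derivation of $\Lambda (\Omega _{A}[C^{\infty }(M^{A},A)])$ with $\delta _{M^{A}}^{A}\circ \delta _{M^{A}}^{A}=0$; the interior product $i_{D}$ is a degree $-1$ derivation obeying the graded Leibniz rule, which vanishes on $C^{\infty }(M^{A},A)$ and which on a K\"{a}hler $1$-form $X$ reduces to $i_{D}(X)=\widetilde{D}(X)$; and the elementary relations $i_{\varphi \cdot D}=\varphi \cdot i_{D}$ and $\widetilde{\varphi \cdot D}=\varphi \cdot \widetilde{D}$, which I would record first since $\varphi \cdot D$ is again an $A$-linear derivation. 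The sign convention I use is the one already recorded in the excerpt, namely $i_{D}(x^{A}\wedge y^{A})=\widetilde{D}(x^{A})\cdot y^{A}-\widetilde{D}(y^{A})\cdot x^{A}$ for the product of two $1$-forms.

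I would establish identity 3 first, as it is the cleanest. Expanding $\mathfrak{L}_{D}[\delta _{M^{A}}^{A}(\varphi )]$ by Cartan gives $i_{D}[\delta _{M^{A}}^{A}(\delta _{M^{A}}^{A}(\varphi ))]+\delta _{M^{A}}^{A}[i_{D}(\delta _{M^{A}}^{A}(\varphi ))]$; the first summand vanishes because $\delta _{M^{A}}^{A}\circ \delta _{M^{A}}^{A}=0$, and the second equals $\delta _{M^{A}}^{A}[D(\varphi )]$ since $\widetilde{D}\circ \delta _{M^{A}}^{A}=D$. This is the $A$-linear counterpart of part 3 of the preceding proposition.

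For identity 2 I would expand $\mathfrak{L}_{D}(\varphi \cdot X)$ by Cartan and apply the two Leibniz rules. In the term $i_{D}[\delta _{M^{A}}^{A}(\varphi \cdot X)]$ I first write $\delta _{M^{A}}^{A}(\varphi \cdot X)=\delta _{M^{A}}^{A}(\varphi )\wedge X+\varphi \cdot \delta _{M^{A}}^{A}(X)$ and then contract with $i_{D}$, carrying the minus sign on the second factor of the product of two $1$-forms $\delta _{M^{A}}^{A}(\varphi )\wedge X$; this produces $D(\varphi )\cdot X-\widetilde{D}(X)\cdot \delta _{M^{A}}^{A}(\varphi )+\varphi \cdot i_{D}[\delta _{M^{A}}^{A}(X)]$. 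In the remaining Cartan term $\delta _{M^{A}}^{A}[i_{D}(\varphi \cdot X)]$ I use $i_{D}(\varphi \cdot X)=\varphi \cdot \widetilde{D}(X)$, since $i_{D}$ kills the degree-$0$ factor, and expand by the function Leibniz rule to get $\widetilde{D}(X)\cdot \delta _{M^{A}}^{A}(\varphi )+\varphi \cdot \delta _{M^{A}}^{A}[i_{D}(X)]$. The two crossed terms $\pm \widetilde{D}(X)\cdot \delta _{M^{A}}^{A}(\varphi )$ cancel, and what remains regroups as $D(\varphi )\cdot X+\varphi \cdot (i_{D}\delta _{M^{A}}^{A}+\delta _{M^{A}}^{A}i_{D})(X)=D(\varphi )\cdot X+\varphi \cdot \mathfrak{L}_{D}(X)$.

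Finally, for identity 1 I substitute $i_{\varphi \cdot D}=\varphi \cdot i_{D}$ into the Cartan expansion of $\mathfrak{L}_{\varphi \cdot D}(X)$, obtaining $\varphi \cdot i_{D}[\delta _{M^{A}}^{A}(X)]+\delta _{M^{A}}^{A}[\varphi \cdot \widetilde{D}(X)]$; expanding the last term by the function Leibniz rule for $\delta _{M^{A}}^{A}$ gives $\widetilde{D}(X)\cdot \delta _{M^{A}}^{A}(\varphi )+\varphi \cdot \delta _{M^{A}}^{A}[i_{D}(X)]$, and collecting the $\varphi$-terms into $\varphi \cdot \mathfrak{L}_{D}(X)$ yields the claim. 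I expect the only genuine subtlety to be the bookkeeping of the graded sign in the Leibniz rule for $i_{D}$ on the product of two $1$-forms occurring in identity 2; once that sign is handled with the convention quoted above, the cancellation of the crossed terms is automatic and everything else is a direct application of Cartan's formula and the derivation properties.
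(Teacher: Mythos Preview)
Your proposal is correct and follows essentially the same approach as the paper: both proofs expand each Lie derivative by the Cartan formula $\mathfrak{L}_{D}=i_{D}\circ\delta_{M^{A}}^{A}+\delta_{M^{A}}^{A}\circ i_{D}$, use $i_{\varphi\cdot D}=\varphi\cdot i_{D}$, the Leibniz rules for $\delta_{M^{A}}^{A}$ and $i_{D}$ (with the same sign convention on the wedge of two $1$-forms), and $\delta_{M^{A}}^{A}\circ\delta_{M^{A}}^{A}=0$, then regroup. The only cosmetic difference is that you prove the three identities in the order 3, 2, 1, whereas the paper proceeds 1, 2, 3.
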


\begin{proof}
For any $D\in Der_{A}[C^{\infty }(M^{A},A)]$, $X\in \Omega _{A}[C^{\infty
}(M^{A},A)]$, and $\varphi \in C^{\infty }(M^{A},A)$, we have

1. 
\begin{eqnarray*}
\mathfrak{L}_{\varphi \cdot D}(X) &=&i_{\varphi \cdot D}[\delta
_{M^{A}}^{A}(X)]+\delta _{M^{A}}^{A}[i_{\varphi \cdot D}(X)] \\
&=&\varphi \cdot i_{D}\left[ \delta _{M^{A}}^{A}(X)\right] +\delta
_{M^{A}}^{A}\left[ \varphi \cdot i_{D}(X)\right] \\
&=&\varphi \cdot i_{D}\left[ \delta _{M^{A}}^{A}(X)\right] +i_{D}(X)\cdot
\delta _{M^{A}}^{A}(\varphi )+\varphi \cdot \delta _{M^{A}}^{A}\left[
i_{D}(X)\right] \\
&=&\varphi \cdot \mathfrak{L}_{D}(X)+\widetilde{D}(X)\cdot \delta
_{M^{A}}^{A}(\varphi )\text{.}
\end{eqnarray*}

2.%
\begin{eqnarray*}
\mathfrak{L}_{D}(\varphi \cdot X) &=&i_{D}\left[ \delta _{M^{A}}^{A}(\varphi
\cdot X)\right] +\delta _{M^{A}}^{A}\left[ i_{D}(\varphi \cdot X)\right] \\
&=&i_{D}\left[ \delta _{M^{A}}^{A}(\varphi )\Lambda X+\varphi \cdot \delta
_{M^{A}}^{A}(X)\right] +\delta _{M^{A}}^{A}\left[ \varphi \cdot i_{D}(X)%
\right] \\
&=&\widetilde{D}[\delta _{M^{A}}^{A}(\varphi )]\cdot X-\delta
_{M^{A}}^{A}(\varphi )\cdot \widetilde{D}(X)+\varphi \cdot i_{D}[\delta
_{M^{A}}^{A}(X)] \\
&&+\delta _{M^{A}}^{A}(\varphi )\cdot \widetilde{D}(X)+\varphi \cdot \delta
_{M^{A}}^{A}\left[ i_{D}(X)\right] \\
&=&D(\varphi )\cdot X+\varphi \cdot i_{D}[\delta _{M^{A}}^{A}(X)]+\varphi
\cdot \delta _{M^{A}}^{A}\left[ i_{D}(X)\right] \\
&=&D(\varphi )\cdot X+\varphi \cdot (i_{D}[\delta _{M^{A}}^{A}(X)]+\delta
_{M^{A}}^{A}\left[ i_{D}(X)\right] ) \\
&=&D(\varphi )\cdot X+\varphi \cdot \mathfrak{L}_{D}(X)\text{.}
\end{eqnarray*}

3.%
\begin{eqnarray*}
&&\mathfrak{L}_{D}\left[ \delta _{M^{A}}^{A}(\varphi )\right] \\
&=&i_{D}\left[ \delta _{M^{A}}^{A}(\delta _{M^{A}}^{A}(\varphi ))\right]
+\delta _{M^{A}}^{A}\left[ i_{D}(\delta _{M^{A}}^{A}(\varphi ))\right] \\
&=&0+\delta _{M^{A}}^{A}\left[ \widetilde{D}\circ \delta
_{M^{A}}^{A}(\varphi )\right] \\
&=&\delta _{M^{A}}^{A}\left[ D(\varphi )\right] \text{.}
\end{eqnarray*}
\end{proof}

\section{The Poisson $2$-form on Weil bundles}

We recall that, when $M$ is a smooth manifold, $A$ a Weil algebra and $M^{A}$
the associated Weil bundle, the $A$-algebra $C^{\infty }(M^{A},A)$ is a
Poisson algebra over $A$ if there exists a bracket $\left\{ ,\right\} $ on $%
C^{\infty }(M^{A},A)$ such that the pair $\left( C^{\infty
}(M^{A},A),\left\{ ,\right\} \right) $ is a Lie algebra over $A$ satisfying 
\begin{equation*}
\left\{ \varphi ,\psi _{1}\cdot \psi _{2}\right\} =\left\{ \varphi ,\psi
_{1}\right\} \cdot \psi _{2}+\psi _{1}\cdot \left\{ \varphi ,\psi
_{2}\right\}
\end{equation*}%
for any $\varphi ,\psi _{1},\psi _{2}\in C^{\infty }(M^{A},A)$. When $%
C^{\infty }(M^{A},A)$ is a Poisson A-algebra, we will say that the manifold $%
M^{A}$ is a $A$-Poisson manifold \cite{bok2},\cite{mab}.

When $\left( M,\left\{ ,\right\} \right) $ is a Poisson manifold, the map%
\begin{equation*}
ad:C^{\infty }(M)\longrightarrow Der_{\mathbb{R}}[C^{\infty
}(M)],f\longmapsto ad(f)
\end{equation*}%
such that $\left[ ad(f)\right] (g)=\left\{ f,g\right\} $ for any $g\in
C^{\infty }(M)$, is a derivation. Thus :

\begin{proposition}
There exists a derivation%
\begin{equation*}
ad^{A}:C^{\infty }(M^{A},A)\longrightarrow Der_{A}[C^{\infty }(M^{A},A)]
\end{equation*}%
such that%
\begin{equation*}
ad^{A}(f^{A})=[ad(f)]^{A}\text{.}
\end{equation*}
\end{proposition}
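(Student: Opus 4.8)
The plan is to realize $ad^A$ as the prolongation of $ad$ and to deduce its properties from those of $\delta^A_{M^A}$ together with the universal property of the module of Kähler differentials established above. First I would observe that $ad$ is genuinely a derivation of $C^\infty(M)$ with values in the $C^\infty(M)$-module $\mathfrak{X}(M)=Der_{\mathbb{R}}[C^\infty(M)]$: skew-symmetry of the bracket and the Leibniz rule give $\{f\cdot g,h\}_M=g\cdot\{f,h\}_M+f\cdot\{g,h\}_M$, that is,
\[
ad(f\cdot g)=f\cdot ad(g)+g\cdot ad(f),\qquad f,g\in C^\infty(M).
\]
Hence $ad\in Der_{\mathbb{R}}(C^\infty(M),\mathfrak{X}(M))$, and the universal property of $(\Omega_{\mathbb{R}}[C^\infty(M)],\delta_M)$ yields a unique $C^\infty(M)$-linear map $\widetilde{ad}:\Omega_{\mathbb{R}}[C^\infty(M)]\longrightarrow\mathfrak{X}(M)$ with $\widetilde{ad}\circ\delta_M=ad$.

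Next I would prolong $\widetilde{ad}$. For every vector field $\theta$ on $M$ the proposition furnishing $\theta^A$ gives $[ad(f)]^A\in Der_A[C^\infty(M^A,A)]=\mathfrak{X}(M^A)$, and the proposition $x\mapsto x^A$ shows that the prolongations $x^A=\delta^A_{M^A}(f^A)$ (for $x=\delta_M(f)$) generate the $C^\infty(M^A,A)$-module $\Omega_A[C^\infty(M^A,A)]$. Mimicking the construction of $\delta^A_{M^A}$ — through a basis $(a_\alpha)$ of $A$, the splitting $\sigma^{-1}$, and the map $\varpi$ — I would build a $C^\infty(M^A,A)$-linear map
\[
\Lambda=\widetilde{ad}^A:\Omega_A[C^\infty(M^A,A)]\longrightarrow\mathfrak{X}(M^A),\qquad \Lambda(x^A)=[\widetilde{ad}(x)]^A,
\]
and then set $ad^A:=\Lambda\circ\delta^A_{M^A}$. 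By the universal property displayed above (with $E=\mathfrak{X}(M^A)$), $ad^A$ is automatically an $A$-derivation of $C^\infty(M^A,A)$ into $\mathfrak{X}(M^A)$; concretely, since $\delta^A_{M^A}$ satisfies $\delta^A_{M^A}(\varphi\cdot\psi)=\delta^A_{M^A}(\varphi)\cdot\psi+\varphi\cdot\delta^A_{M^A}(\psi)$ and $\Lambda$ is module-linear, one gets $ad^A(\varphi\cdot\psi)=\varphi\cdot ad^A(\psi)+\psi\cdot ad^A(\varphi)$. The defining identity then reads off from the chain
\[
ad^A(f^A)=\Lambda(\delta^A_{M^A}(f^A))=\Lambda([\delta_M(f)]^A)=[\widetilde{ad}(\delta_M(f))]^A=[ad(f)]^A,
\]
using the defining property $\delta^A_{M^A}(f^A)=[\delta_M(f)]^A$, the definition of $\Lambda$ on generators, and $\widetilde{ad}\circ\delta_M=ad$.

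The main obstacle is the well-definedness and $C^\infty(M^A,A)$-linearity of $\Lambda$, since the generators $x^A$ carry the $C^\infty(M^A,A)$-linear relations inherited from $\Omega_{\mathbb{R}}[C^\infty(M)]$, and one must check that $x^A\mapsto[\widetilde{ad}(x)]^A$ respects them. This is exactly where the compatibility of the prolongation functor with the algebraic operations is needed: the identities $(x+y)^A=x^A+y^A$ and $(f\cdot x)^A=f^A\cdot x^A$ on Kähler forms, together with $(\theta_1+\theta_2)^A=\theta_1^A+\theta_2^A$ and $(f\cdot\theta)^A=f^A\cdot\theta^A$ on vector fields and the $C^\infty(M)$-linearity of $\widetilde{ad}$, force the assignment to descend consistently to $\Omega_A[C^\infty(M^A,A)]$. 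Once this bookkeeping is secured — most efficiently by carrying $\widetilde{ad}$ through the same $\sigma^{-1}$, $id_A\otimes(-)$, $\varpi$ factorization used for $\delta^A_{M^A}$ — the remaining verifications are routine.
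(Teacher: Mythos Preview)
Your proposal is correct but proceeds in the mirror-image order to the paper. The paper works directly at the level of functions: it writes down the commutative square
\[
\begin{array}{ccc}
C^{\infty }(M^{A},A) & \overset{\widetilde{\tau }}{\longrightarrow } & Der_{A}[C^{\infty }(M^{A},A)] \\
\uparrow \gamma _{M} &  & \uparrow \Phi \\
C^{\infty }(M) & \overset{ad}{\longrightarrow } & Der_{\mathbb{R}}[C^{\infty}(M)]
\end{array}
\]
with $\gamma_M(f)=f^A$ and $\Phi(\theta)=\theta^A$, takes $ad^A:=\widetilde{\tau}$ to be the induced top arrow, and only \emph{afterwards} factors $ad^A$ through $\Omega_A[C^\infty(M^A,A)]$ via the universal property to obtain $\widetilde{ad^A}$. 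You reverse this: first factor $ad=\widetilde{ad}\circ\delta_M$ on $M$, then prolong the $C^\infty(M)$-linear map $\widetilde{ad}$ to a $C^\infty(M^A,A)$-linear $\Lambda$ on the $A$-level K\"ahler module, and finally set $ad^A:=\Lambda\circ\delta^A_{M^A}$. Your route is more constructive and isolates precisely where the work lies (the well-definedness of $\Lambda$ on the generators $x^A$), whereas the paper's square is terser but leaves the existence of the lift $\widetilde{\tau}$ essentially asserted; both arguments ultimately rest on the same generating principle, that the elements $[\delta_M(f)]^A$ span $\Omega_A[C^\infty(M^A,A)]$ over $C^\infty(M^A,A)$ (equivalently, that $A$-derivations of $C^\infty(M^A,A)$ are determined by their values on the $f^A$). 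One minor point of attribution: that generation statement is not contained in the proposition on the map $x\mapsto x^A$, which only asserts injectivity, but rather in the subsequent proposition on prolongations $\eta^A\in\Lambda^p(\Omega_A[C^\infty(M^A,A)])$.
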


Let's consider{\LARGE \ }the following diagram commutative: 
\begin{equation*}
\begin{array}{ccc}
C^{\infty }(M^{A},A) & \overset{\widetilde{\tau }}{\longrightarrow } & 
Der_{A}[C^{\infty }(M^{A},A)] \\ 
\uparrow \gamma _{M} &  & \uparrow \Phi \\ 
C^{\infty }(M) & \overset{ad}{\longrightarrow } & Der_{\mathbb{R}}[C^{\infty
}(M)]%
\end{array}%
\end{equation*}%
i.e. 
\begin{equation*}
\widetilde{\tau }\circ \gamma _{M}=\Phi \circ ad\text{,}
\end{equation*}%
\ where 
\begin{equation*}
\gamma _{M}:C^{\infty }(M)\longrightarrow C^{\infty }(M^{A},A),f\longmapsto
f^{A}
\end{equation*}%
and 
\begin{equation*}
\Phi :Der_{\mathbb{R}}[C^{\infty }(M)]\longrightarrow Der_{A}[C^{\infty
}(M^{A},A)],\theta \longmapsto \theta ^{A}\text{.}
\end{equation*}%
For any $f\in C^{\infty }(M)$, we have%
\begin{equation*}
\widetilde{\tau }\circ \gamma _{M}(f)=\widetilde{\tau }(f^{A})
\end{equation*}%
and%
\begin{eqnarray*}
\Phi \circ ad(f) &=&\Phi \lbrack ad(f)] \\
&=&[ad(f)]^{A}\text{.}
\end{eqnarray*}%
Thus, there exists $ad^{A}=\widetilde{\tau }$ such that%
\begin{equation*}
ad^{A}(f^{A})=[ad(f)]^{A}\text{.}
\end{equation*}%
As 
\begin{equation*}
ad^{A}:C^{\infty }(M^{A},A)\longrightarrow Der_{A}[C^{\infty }(M^{A},A)]
\end{equation*}%
is a derivation, then there exists a unique $C^{\infty }(M^{A},A)$-linear map%
\begin{equation*}
\widetilde{ad^{A}}:\Omega _{A}[C^{\infty }(M^{A},A)]\longrightarrow
Der_{A}[C^{\infty }(M^{A},A)]
\end{equation*}%
such that 
\begin{equation*}
\widetilde{ad^{A}}\circ \delta _{M^{A}}^{A}=ad^{A}\text{.}
\end{equation*}%
Let's consider the canonical isomorphism 
\begin{equation*}
\sigma _{M^{A}}:\Omega _{A}[C^{\infty }(M^{A},A)]^{\ast }\longrightarrow
Der_{A}[C^{\infty }(M^{A},A)],\Psi \longmapsto \Psi \circ \delta _{M^{A}}^{A}
\end{equation*}%
and let%
\begin{equation*}
\sigma _{M^{A}}^{-1}\circ \widetilde{ad^{A}}:\Omega _{A}[C^{\infty
}(M^{A},A)]\overset{\widetilde{ad^{A}}}{\longrightarrow }Der_{A}[C^{\infty
}(M^{A},A)]\overset{\sigma _{M^{A}}^{-1}}{\longrightarrow }\Omega
_{A}[C^{\infty }(M^{A},A)]^{\ast }
\end{equation*}%
be the map.

\begin{proposition}
If $(M,\omega _{M})$ is a Poisson manifold, then the map, 
\begin{equation*}
\omega _{M^{A}}^{A}:\Omega _{A}[C^{\infty }(M^{A},A)]\times \Omega
_{A}[C^{\infty }(M^{A},A)]\longrightarrow C^{\infty }(M^{A},A)
\end{equation*}%
such that for any $X,Y\in \Omega _{A}[C^{\infty }(M^{A},A)]$%
\begin{equation*}
\omega _{M^{A}}^{A}(X,Y)=-[\sigma _{M^{A}}^{-1}\circ \widetilde{ad^{A}}%
(X)](Y)
\end{equation*}%
is a skew-symmetric $2$-form on $\Omega _{A}[C^{\infty }(M^{A},A)]$ such that%
\begin{equation*}
\omega _{M^{A}}^{A}(x^{A},y^{A})=[\omega _{M}(x,y)]^{A}\text{,}
\end{equation*}%
for any $x$ and $y$ in $\Omega _{\mathbb{R}}[C^{\infty }(M)]$.
\end{proposition}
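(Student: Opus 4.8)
The plan is to verify three things in turn: the $A$-bilinearity of $\omega _{M^{A}}^{A}$, its skew-symmetry, and the prolongation identity $\omega _{M^{A}}^{A}(x^{A},y^{A})=[\omega _{M}(x,y)]^{A}$. The whole argument rests on reducing every statement to the exact prolonged generators $\delta _{M^{A}}^{A}(f^{A})=(\delta _{M}f)^{A}$, where the classical Poisson structure of $M$ supplies what is needed.

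First I would record that $\omega _{M^{A}}^{A}$ is $C^{\infty }(M^{A},A)$-bilinear, hence in particular $A$-bilinear. Indeed $\widetilde{ad^{A}}$ is $C^{\infty }(M^{A},A)$-linear by construction and $\sigma _{M^{A}}$ is an isomorphism of $C^{\infty }(M^{A},A)$-modules, so $X\mapsto \sigma _{M^{A}}^{-1}\circ \widetilde{ad^{A}}(X)$ is $C^{\infty }(M^{A},A)$-linear with values in $\Omega _{A}[C^{\infty }(M^{A},A)]^{\ast }$, and evaluation at $Y$ is again $C^{\infty }(M^{A},A)$-linear. Next, using $\widetilde{ad^{A}}\circ \delta _{M^{A}}^{A}=ad^{A}$ together with the defining property $\sigma _{M^{A}}(\Psi )=\Psi \circ \delta _{M^{A}}^{A}$ of the canonical isomorphism, I would obtain, for all $\varphi ,\psi \in C^{\infty }(M^{A},A)$,
\begin{equation*}
\omega _{M^{A}}^{A}\bigl(\delta _{M^{A}}^{A}(\varphi ),\delta _{M^{A}}^{A}(\psi )\bigr)=-[ad^{A}(\varphi )](\psi )\text{.}
\end{equation*}

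Specializing to $\varphi =f^{A}$ and $\psi =g^{A}$ with $f,g\in C^{\infty }(M)$, the identity $ad^{A}(f^{A})=[ad(f)]^{A}$ and the prolongation rule $\theta ^{A}(g^{A})=[\theta (g)]^{A}$ give
\begin{equation*}
\omega _{M^{A}}^{A}\bigl(\delta _{M^{A}}^{A}(f^{A}),\delta _{M^{A}}^{A}(g^{A})\bigr)=-[ad^{A}(f^{A})](g^{A})=-\bigl[[ad(f)](g)\bigr]^{A}=-[\{f,g\}_{M}]^{A}\text{.}
\end{equation*}
Since the bracket $\{,\}_{M}$ is skew-symmetric, this expression changes sign when $f$ and $g$ are exchanged, so $\omega _{M^{A}}^{A}$ is skew-symmetric on the exact prolonged generators. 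The earlier proposition shows that for $p=1$ the module $\Omega _{A}[C^{\infty }(M^{A},A)]$ is generated, over $C^{\infty }(M^{A},A)$, precisely by the elements $\delta _{M^{A}}^{A}(f^{A})$, $f\in C^{\infty }(M)$; combined with the $C^{\infty }(M^{A},A)$-bilinearity established above, skew-symmetry on generators propagates to all of $\Omega _{A}[C^{\infty }(M^{A},A)]$. I expect this to be the main obstacle: the direct computation only yields $-[ad^{A}(\varphi )](\psi )$, whose skew-symmetry for arbitrary $\varphi ,\psi $ is not apparent, and it is the passage through the prolonged generators — where antisymmetry descends from $(M,\omega _{M})$ — together with the generation result, that forces the conclusion.

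Finally, for the prolongation identity I would start from the classical relation $\omega _{M}(\delta _{M}f,\delta _{M}g)=-\{f,g\}_{M}$, so that the generator computation reads $\omega _{M^{A}}^{A}((\delta _{M}f)^{A},(\delta _{M}g)^{A})=[\omega _{M}(\delta _{M}f,\delta _{M}g)]^{A}$. Writing arbitrary $x=\sum_{i}f_{i}\cdot \delta _{M}(g_{i})$ and $y=\sum_{j}h_{j}\cdot \delta _{M}(k_{j})$, I would use $x^{A}=\sum_{i}f_{i}^{A}\cdot \delta _{M^{A}}^{A}(g_{i}^{A})$, the $C^{\infty }(M^{A},A)$-bilinearity of $\omega _{M^{A}}^{A}$, and the homomorphism properties $(f\cdot g)^{A}=f^{A}\cdot g^{A}$ and $(f+g)^{A}=f^{A}+g^{A}$ of the prolongation; both sides then expand to the same double sum, yielding $\omega _{M^{A}}^{A}(x^{A},y^{A})=[\omega _{M}(x,y)]^{A}$ for all $x,y\in \Omega _{\mathbb{R}}[C^{\infty }(M)]$. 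This simultaneously confirms that $\omega _{M^{A}}^{A}$ is the sought prolongation and completes the verification that it is a skew-symmetric $2$-form.
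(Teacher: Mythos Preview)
Your argument is correct and follows the same three-part structure as the paper (bilinearity, skew-symmetry, prolongation identity), but the skew-symmetry step is handled differently. The paper computes $\omega _{M^{A}}^{A}(X,X)$ directly for an arbitrary $X=\sum_{i}\varphi _{i}\cdot \delta _{M^{A}}^{A}(\psi _{i})$ with $\varphi _{i},\psi _{i}\in C^{\infty }(M^{A},A)$, obtains the double sum $-\sum_{j,k}\varphi _{j}\varphi _{k}\,[ad^{A}(\psi _{k})](\psi _{j})$, and concludes it vanishes by invoking antisymmetry of the bracket $\{\psi _{k},\psi _{j}\}=[ad^{A}(\psi _{k})](\psi _{j})$ on $C^{\infty }(M^{A},A)$. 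You instead restrict first to exact prolonged generators $\delta _{M^{A}}^{A}(f^{A})$, where antisymmetry descends transparently from $\{,\}_{M}$ on $M$, and then extend by $C^{\infty }(M^{A},A)$-bilinearity using the generation of $\Omega _{A}[C^{\infty }(M^{A},A)]$ by these prolonged differentials. Your route has the virtue of making explicit where the skew-symmetry originates --- you correctly note that antisymmetry of $[ad^{A}(\varphi )](\psi )$ for general $\varphi ,\psi $ is not self-evident at this stage --- at the cost of relying on the generation statement for $\Omega _{A}[C^{\infty }(M^{A},A)]$; the paper's computation is more direct once one accepts that the prolonged bracket is already antisymmetric. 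For the prolongation identity the two proofs coincide in substance: both reduce to bilinearity and the generator formula $\omega _{M^{A}}^{A}((\delta _{M}f)^{A},(\delta _{M}g)^{A})=-[\{f,g\}_{M}]^{A}$.
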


\begin{proof}
For any $X\in \Omega _{A}[C^{\infty }(M^{A},A)]$, we have $%
X=\sum\limits_{i\in I:fini}\varphi _{i}\cdot \delta _{M^{A}}^{A}(\psi _{i}),$
with $\varphi _{i}\in C^{\infty }(M^{A},A)$, $\psi _{i}\in C^{\infty
}(M^{A},A)$.%
\begin{eqnarray*}
\omega _{M^{A}}(X,X) &=&-[\sigma _{M^{A}}^{-1}\circ \widetilde{ad^{A}}(X)](X)
\\
&=&-\sum\limits_{j\in I:finite}\varphi _{j}\cdot \lbrack \sigma
_{M^{A}}^{-1}\circ \widetilde{ad^{A}}(X)]\delta _{M^{A}}^{A}(\psi _{j}) \\
&=&-\sum\limits_{j\in I:finite}\varphi _{j}\cdot \lbrack \widetilde{ad^{A}}%
(X)](\psi _{j}) \\
&=&-\sum\limits_{j,k\in I:finite}\varphi _{j}\cdot \varphi _{k}\cdot \lbrack 
\widetilde{ad^{A}}(\delta _{M^{A}}^{A}(\psi _{k}))](\psi _{j}) \\
&=&-\sum\limits_{j,k\in I:finite}\varphi _{j}\cdot \varphi _{k}\cdot \lbrack
ad^{A}(\psi _{k})](\psi _{j}) \\
&=&-\sum\limits_{j,k\in I:finite}\varphi _{j}\cdot \varphi _{k}\cdot \{\psi
_{k},\psi _{j}\} \\
&=&0\text{.}
\end{eqnarray*}

For any $X_{1},X_{2}$ and $Y\in \Omega _{A}[C^{\infty }(M^{A},A)]$ and for
any $\varphi \in C^{\infty }(M^{A},A)$, we have%
\begin{eqnarray*}
\omega _{M^{A}}[(\varphi \cdot X_{1}+X_{2}),Y] &=&-[\sigma
_{M^{A}}^{-1}\circ \widetilde{ad^{A}}(\varphi \cdot X_{1}+X_{2})](Y) \\
&=&-(\sigma _{M^{A}}^{-1}[\varphi \cdot \widetilde{ad^{A}}(X_{1})+\widetilde{%
ad^{A}}(X_{2})](Y) \\
&=&-\varphi \cdot (\sigma _{M^{A}}^{-1}[\widetilde{ad^{A}}%
(X_{1})](Y)+(\sigma _{M^{A}}^{-1}[\widetilde{ad^{A}}(X_{2})](Y) \\
&=&\varphi \cdot \omega _{M^{A}}(X_{1},Y)+\omega _{M^{A}}(X_{2},Y)\text{.}
\end{eqnarray*}

For any $x$ and $y$ in $\Omega _{\mathbb{R}}[C^{\infty }(M)]$, 
\begin{equation*}
x^{A}=\sum\limits_{i\in I:fini}f_{i}^{A}\cdot \delta
_{M^{A}}^{A}(f_{i}^{\prime A})\text{ and }y^{A}=\sum\limits_{j\in
I:fini}g_{j}^{A}\cdot \delta _{M^{A}}^{A}(g_{j}^{\prime A})\text{.}
\end{equation*}%
Thus,%
\begin{eqnarray*}
\omega _{M^{A}}^{A}(x^{A},y^{A}) &=&-[\sigma _{M^{A}}^{-1}\circ \widetilde{%
ad^{A}}(x^{A})](y^{A}) \\
&=&-[\sigma _{M^{A}}^{-1}\circ \widetilde{ad^{A}}(\sum\limits_{i\in
I:finite}f_{i}^{A}\cdot \delta _{M^{A}}^{A}(f_{i}^{\prime
A}))](\sum\limits_{j\in I:finite}g_{j}^{A}\cdot \delta
_{M^{A}}^{A}(g_{j}^{\prime A})) \\
&=&-\sum\limits_{i,j\in I:finite}f_{i}^{A}\cdot g_{j}^{A}\cdot \lbrack
\sigma _{M^{A}}^{-1}\circ \widetilde{ad^{A}}(\delta
_{M^{A}}^{A}(f_{i}^{\prime A}))](\delta _{M^{A}}^{A}(g_{j}^{\prime A})) \\
&=&-\sum\limits_{i,j\in I:finite}f_{i}^{A}\cdot g_{j}^{A}\cdot \lbrack
\sigma _{M^{A}}^{-1}\circ \widetilde{ad^{A}}(\delta
_{M^{A}}^{A}(f_{i}^{\prime A}))](\delta _{M^{A}}^{A}(g_{j}^{\prime A})) \\
&=&-\sum\limits_{i,j\in I:finite}f_{i}^{A}\cdot g_{j}^{A}\cdot \lbrack
\sigma _{M^{A}}^{-1}(\widetilde{ad^{A}}\circ \delta
_{M^{A}}^{A}(f_{i}^{\prime A}))](\delta _{M^{A}}^{A}(g_{j}^{\prime A})) \\
&=&-\sum\limits_{i,j\in I:finite}f_{i}^{A}\cdot g_{j}^{A}\cdot \lbrack
\sigma _{M^{A}}^{-1}(ad^{A}(f_{i}^{\prime A}))](\delta
_{M^{A}}^{A}(g_{j}^{\prime A})) \\
&=&-\sum\limits_{i,j\in I:finite}f_{i}^{A}\cdot g_{j}^{A}\cdot \lbrack
\sigma _{M^{A}}^{-1}(ad(f_{i}^{\prime })^{A})](\delta _{M}(g_{j}^{\prime
}))^{A} \\
&=&-[\sum\limits_{i,j\in I:finite}f_{i}\cdot g_{j}\cdot \lbrack \sigma
_{M}^{-1}(ad(f_{i}^{\prime }))](\delta _{M}(g_{j}^{\prime }))]^{A} \\
&=&-[\sum\limits_{i,j\in I:finite}f_{i}\cdot g_{j}\cdot \lbrack \sigma
_{M}^{-1}(\widetilde{ad}\circ \delta _{M}(f_{i}^{\prime }))](\delta
_{M}(g_{j}^{\prime }))]^{A} \\
&=&-[\sigma _{M}^{-1}\circ \widetilde{ad}(\sum\limits_{i\in
I:finite}f_{i}\cdot \delta _{M}(f_{i}^{\prime }))](\sum\limits_{j\in
I:finite}g_{j}\cdot \delta _{M}(g_{j}^{\prime }))]^{A} \\
&=&[\omega _{M}(x,y)]^{A}\text{.}
\end{eqnarray*}
\end{proof}

\begin{proposition}
When $(M,\omega _{M})$ is a Poisson manifold of Poisson $2$-form $\omega
_{M},$ then $(M^{A},\omega _{M^{A}}^{A})$ is a Poisson manifold.
\end{proposition}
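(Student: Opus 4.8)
The plan is to read off from $\omega_{M^{A}}^{A}$ the candidate bracket
\[
\{\varphi ,\psi \}=-\omega _{M^{A}}^{A}(\delta _{M^{A}}^{A}(\varphi ),\delta _{M^{A}}^{A}(\psi )),\qquad \varphi ,\psi \in C^{\infty }(M^{A},A),
\]
and to verify that $(C^{\infty }(M^{A},A),\{,\})$ is a Poisson algebra, which is exactly the assertion that $(M^{A},\omega _{M^{A}}^{A})$ is a Poisson manifold. Unwinding the construction of $\omega _{M^{A}}^{A}$ through $\widetilde{ad^{A}}$ and the isomorphism $\sigma _{M^{A}}$, I would first record the convenient reformulation $\{\varphi ,\psi \}=[ad^{A}(\varphi )](\psi )$. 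Three of the four axioms are then immediate: skew-symmetry follows from the skew-symmetry of $\omega _{M^{A}}^{A}$ proved above; $A$-bilinearity follows from the $A$-bilinearity of $\omega _{M^{A}}^{A}$ together with the $A$-linearity of $\delta _{M^{A}}^{A}$; and the Leibniz rule $\{\varphi ,\psi _{1}\cdot \psi _{2}\}=\{\varphi ,\psi _{1}\}\cdot \psi _{2}+\psi _{1}\cdot \{\varphi ,\psi _{2}\}$ holds because $ad^{A}(\varphi )\in Der_{A}[C^{\infty }(M^{A},A)]$ is a derivation. So the whole weight of the proof falls on the Jacobi identity.

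For Jacobi I would work with the Jacobiator
\[
J(\varphi ,\psi ,\chi )=\{\{\varphi ,\psi \},\chi \}+\{\{\psi ,\chi \},\varphi \}+\{\{\chi ,\varphi \},\psi \},
\]
and exploit the identity $J(\varphi ,\psi ,\cdot )=ad^{A}(\{\varphi ,\psi \})-[ad^{A}(\varphi ),ad^{A}(\psi )]$, which follows by expanding the last two terms with skew-symmetry and the Leibniz rule. Thus the Jacobi identity is equivalent to $ad^{A}$ being a homomorphism of $A$-Lie algebras into $\mathfrak{X}(M^{A})$. On functions of the form $f^{A}$ this is where the hypothesis that $M$ is Poisson enters: from $\delta _{M^{A}}^{A}(f^{A})=[\delta _{M}(f)]^{A}$ and $\omega _{M^{A}}^{A}(x^{A},y^{A})=[\omega _{M}(x,y)]^{A}$ one obtains $\{f^{A},g^{A}\}=[\{f,g\}_{M}]^{A}$; then the relations $\theta ^{A}(f^{A})=[\theta (f)]^{A}$ and $[\theta _{1},\theta _{2}]^{A}=[\theta _{1}^{A},\theta _{2}^{A}]$ established above reduce both $ad^{A}(\{f^{A},g^{A}\})$ and $[ad^{A}(f^{A}),ad^{A}(g^{A})]$ to $[ad(f),ad(g)]^{A}$, and these agree precisely because $ad$ is a Lie-algebra homomorphism on the Poisson manifold $M$. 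Hence $J$ vanishes on every triple $(f^{A},g^{A},h^{A})$.

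The \emph{main obstacle} is upgrading this from the generating functions $f^{A}$ to arbitrary $\varphi ,\psi ,\chi \in C^{\infty }(M^{A},A)$. Two structural facts make this possible. First, $J(\varphi ,\psi ,\cdot )$ is a derivation in its third argument, being the difference of the derivation $ad^{A}(\{\varphi ,\psi \})$ and the commutator $[ad^{A}(\varphi ),ad^{A}(\psi )]$ of two derivations; and since skew-symmetry of the bracket makes $J$ totally antisymmetric, $J$ is in fact a derivation in each of its three arguments. Second, by the characterisation of $\mathfrak{X}(M^{A})$ recalled at the start (a vector field on $M^{A}$ is an $A$-linear derivation of $C^{\infty }(M^{A},A)$, equivalently a linear map $C^{\infty }(M)\to C^{\infty }(M^{A},A)$ obeying the Leibniz rule), every such derivation is determined by its values on the functions $f^{A}$, $f\in C^{\infty }(M)$. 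Combining these, a three-step nested argument finishes the proof: fixing $f^{A},g^{A}$, the derivation $J(f^{A},g^{A},\cdot )$ vanishes on all $h^{A}$ and hence identically; fixing $f^{A}$ and arbitrary $\chi $, the derivation $J(f^{A},\cdot ,\chi )$ vanishes on all $g^{A}$ and hence identically; and finally $J(\cdot ,\psi ,\chi )$ vanishes on all $f^{A}$ and hence identically. Therefore $J\equiv 0$, the bracket satisfies Jacobi, $C^{\infty }(M^{A},A)$ is a Poisson algebra, and $(M^{A},\omega _{M^{A}}^{A})$ is a Poisson manifold. The delicate point deserving care is this generation statement: it is what lets the values on the base functions $f^{A}$ control every derivation, and the nested argument must be run in all three slots rather than merely the last.
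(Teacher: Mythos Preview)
Your argument is correct, and in fact considerably more complete than the paper's own proof. The paper's proof consists of only two lines: it records that
\[
\omega _{M^{A}}^{A}\bigl(\delta _{M^{A}}^{A}(f^{A}),\delta _{M^{A}}^{A}(g^{A})\bigr)=-\{f,g\}_{M}^{A}
\quad\text{and}\quad
\omega _{M^{A}}^{A}(x^{A},y^{A})=[\omega _{M}(x,y)]^{A},
\]
and then simply asserts ``We deduce that $(M^{A},\omega _{M^{A}}^{A})$ is a Poisson manifold,'' leaving the Jacobi identity entirely implicit (presumably to the cited earlier work \cite{bok1}, \cite{mab}).

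Your route starts from the same compatibility relations but supplies the missing analysis: you identify the bracket with $[ad^{A}(\varphi)](\psi)$, reduce Jacobi to the statement that $ad^{A}$ is a Lie-algebra map, verify this on prolonged functions via $[\theta_1,\theta_2]^{A}=[\theta_1^{A},\theta_2^{A}]$ and the Poisson hypothesis on $M$, and then extend to all of $C^{\infty}(M^{A},A)$ by the three-slot Jacobiator-as-derivation bootstrap, invoking the equivalence (3)$\Leftrightarrow$(4) of Theorem~1 to pass from values on the $f^{A}$ to arbitrary functions. This is a genuine addition: the paper never writes down the Jacobiator argument or the extension step, so what you have produced is a self-contained proof where the paper offers only a sketch. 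The one point worth making explicit in your write-up is that the alternating property of $J$ (which you use to transfer the derivation property from the third slot to the other two) itself relies on the skew-symmetry of $\omega_{M^{A}}^{A}$ already established in the preceding proposition; you mention this but it is easy for a reader to miss.
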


\begin{proof}
For any $f$ and $g$ in $C^{\infty }(M)$, 
\begin{eqnarray*}
\omega _{M^{A}}^{A}(\delta _{M^{A}}^{A}(f^{A}),\delta _{M^{A}}^{A}(g^{A}))
&=&\omega _{M^{A}}^{A}([\delta _{M}(f)]^{A},[\delta _{M}(g)]^{A}) \\
&=&[\omega _{M}(\delta _{M^{A}}(f),\delta _{M}(g))]^{A} \\
&=&-\{f,g\}_{M}^{A}\text{.}
\end{eqnarray*}%
and 
\begin{equation*}
\omega _{M^{A}}^{A}(x^{A},y^{A})=[\omega _{M}(x,y)]^{A}\text{,}
\end{equation*}%
for any $x,y\in \Omega _{\mathbb{R}}[C^{\infty }(M)]$. We deduce that $%
(M^{A},\omega _{M^{A}}^{A})$ is a Poisson manifold.
\end{proof}

\begin{theorem}
The manifold $M^{A}$ is a Poisson manifold if and only if there exists a
skew-symmetric $2$-form 
\begin{equation*}
\omega _{M^{A}}^{A}:\Omega _{A}[C^{\infty }(M^{A},A)]\times \Omega
_{A}[C^{\infty }(M^{A},A)]\longrightarrow C^{\infty }(M^{A},A)
\end{equation*}%
such that for any $\varphi $ and $\psi $ in $C^{\infty }(M^{A},A)$, 
\begin{equation*}
\{\varphi ,\psi \}_{M^{A}}=-\omega _{M^{A}}^{A}(\delta _{M^{A}}^{A}(\varphi
),\delta _{M^{A}}^{A}(\psi ))
\end{equation*}%
defines a structure of $A$-Lie algebra over $C^{\infty }(M^{A},A)$.
Moreover, for any $f$ and $g$ in $C^{\infty }(M)$,%
\begin{equation*}
\{f^{A},g^{A}\}_{M^{A}}=\{f,g\}_{M}^{A}\text{.}
\end{equation*}
\end{theorem}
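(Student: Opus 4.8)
The plan is to prove the two implications of the equivalence separately and then to read off the compatibility formula from the construction. By definition, $M^{A}$ is an $A$-Poisson manifold exactly when $C^{\infty}(M^{A},A)$ carries a bracket $\{,\}_{M^{A}}$ making it an $A$-Lie algebra for which each $ad(\varphi):\psi\longmapsto\{\varphi,\psi\}_{M^{A}}$ is an $A$-derivation, so in each direction I would keep track of which of these two requirements (the $A$-Lie structure, the Leibniz rule) is hypothesis and which must be derived.

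For the implication from the $2$-form to the Poisson structure, I would assume a skew-symmetric $2$-form $\omega_{M^{A}}^{A}$ is given for which the induced bracket $\{\varphi,\psi\}_{M^{A}}=-\omega_{M^{A}}^{A}(\delta_{M^{A}}^{A}(\varphi),\delta_{M^{A}}^{A}(\psi))$ is already an $A$-Lie bracket, and verify that this bracket is a biderivation. The only axiom still to check is the Leibniz rule in the second slot, and this is automatic: since $\delta_{M^{A}}^{A}$ is a derivation and $\omega_{M^{A}}^{A}$ is $C^{\infty}(M^{A},A)$-bilinear, substituting $\delta_{M^{A}}^{A}(\psi_{1}\cdot\psi_{2})=\delta_{M^{A}}^{A}(\psi_{1})\cdot\psi_{2}+\psi_{1}\cdot\delta_{M^{A}}^{A}(\psi_{2})$ and pulling the scalar factors out of $\omega_{M^{A}}^{A}$ yields $\{\varphi,\psi_{1}\cdot\psi_{2}\}_{M^{A}}=\{\varphi,\psi_{1}\}_{M^{A}}\cdot\psi_{2}+\psi_{1}\cdot\{\varphi,\psi_{2}\}_{M^{A}}$. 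Hence $C^{\infty}(M^{A},A)$ is a Poisson $A$-algebra and $M^{A}$ is an $A$-Poisson manifold.

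For the converse, I would start from the bracket $\{,\}_{M^{A}}$ witnessing that $M^{A}$ is Poisson and rebuild the $2$-form by the machinery already set up earlier in the paper. The assignment $ad:C^{\infty}(M^{A},A)\longrightarrow Der_{A}[C^{\infty}(M^{A},A)]$ is a derivation by the Jacobi identity, so the universal property of $(\Omega_{A}[C^{\infty}(M^{A},A)],\delta_{M^{A}}^{A})$ produces a unique $C^{\infty}(M^{A},A)$-linear map $\widetilde{ad}$ with $\widetilde{ad}\circ\delta_{M^{A}}^{A}=ad$; composing with the canonical isomorphism $\sigma_{M^{A}}^{-1}$ then defines $\omega_{M^{A}}^{A}(X,Y)=-[\sigma_{M^{A}}^{-1}\circ\widetilde{ad}(X)](Y)$. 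The main obstacle here is to confirm that this form is well defined and skew-symmetric; I would prove $\omega_{M^{A}}^{A}(X,X)=0$ by writing $X=\sum_{i}\varphi_{i}\cdot\delta_{M^{A}}^{A}(\psi_{i})$, using $\widetilde{ad}\circ\delta_{M^{A}}^{A}=ad$ to reduce the expression to a double sum of brackets $\{\psi_{k},\psi_{j}\}_{M^{A}}$, and invoking skew-symmetry of the bracket to cancel, exactly as in the earlier skew-symmetry argument. By construction one then has $-\omega_{M^{A}}^{A}(\delta_{M^{A}}^{A}(\varphi),\delta_{M^{A}}^{A}(\psi))=\{\varphi,\psi\}_{M^{A}}$, so the $2$-form recovers the given $A$-Lie bracket.

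Finally, for the compatibility statement I would specialise to $\varphi=f^{A}$ and $\psi=g^{A}$ and combine the identity $\delta_{M^{A}}^{A}(f^{A})=[\delta_{M}(f)]^{A}$ with the relation $\omega_{M^{A}}^{A}(x^{A},y^{A})=[\omega_{M}(x,y)]^{A}$ already proved, obtaining $\{f^{A},g^{A}\}_{M^{A}}=-\omega_{M^{A}}^{A}([\delta_{M}(f)]^{A},[\delta_{M}(g)]^{A})=-[\omega_{M}(\delta_{M}(f),\delta_{M}(g))]^{A}=[\{f,g\}_{M}]^{A}=\{f,g\}_{M}^{A}$. The only genuinely delicate point in the whole argument is the skew-symmetry and well-definedness of $\omega_{M^{A}}^{A}$ in the converse direction; everything else is an assembly of the universal property, the isomorphism $\sigma_{M^{A}}$, and the derivation property of $\delta_{M^{A}}^{A}$ already available.
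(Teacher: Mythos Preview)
Your argument is correct and follows essentially the same route as the paper: the paper's proof simply invokes the preceding proposition (which builds $\omega_{M^{A}}^{A}$ from $ad^{A}$ via the universal property of $\delta_{M^{A}}^{A}$ and the isomorphism $\sigma_{M^{A}}$) and then reads off the compatibility formula exactly as you do, so your write-up is really a more explicit unpacking of what the paper leaves to that reference. One small slip: in the converse direction you justify that $ad:C^{\infty}(M^{A},A)\longrightarrow Der_{A}[C^{\infty}(M^{A},A)]$ is a derivation ``by the Jacobi identity,'' but the identity $ad(\varphi_{1}\varphi_{2})=\varphi_{1}\cdot ad(\varphi_{2})+\varphi_{2}\cdot ad(\varphi_{1})$ is the Leibniz rule in the first slot (equivalently, skew-symmetry plus the Leibniz rule in the second slot); the Jacobi identity instead gives that $ad$ is a Lie-algebra morphism. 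This does not affect the validity of your construction.
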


\begin{proof}
Indeed{\LARGE , }according to the previous proposition, the{\LARGE \ }%
bracket 
\begin{equation*}
\{\varphi ,\psi \}_{M^{A}}=-\omega _{M^{A}}^{A}(\delta _{M^{A}}^{A}(\varphi
),\delta _{M^{A}}^{A}(\psi ))
\end{equation*}%
defines a structure of $A$-Lie algebra over $C^{\infty }(M^{A},A)$. For any $%
f$ and $g$ in $C^{\infty }(M)$,%
\begin{eqnarray*}
\{f^{A},g^{A}\}_{M^{A}} &=&-\omega _{M^{A}}^{A}(\delta
_{M^{A}}^{A}(f^{A}),\delta _{M^{A}}^{A}(g^{A})) \\
&=&\{f,g\}_{M}^{A}\text{.}
\end{eqnarray*}%
In this case, we will say that $\omega _{M^{A}}^{A}$ is the Poisson $2$-form
of the $A$-Poisson manifold $M^{A}$ and we denote $(M^{A},\omega
_{M^{A}}^{A})$ the $A$-Poisson manifold of Poisson $2$-form $\omega
_{M^{A}}^{A}$.
\end{proof}

\begin{proposition}
When $(M,\omega _{M})$ is a Poisson manifold of Poisson $2$-form $\omega
_{M} $, then for any $x,y\in \Omega _{\mathbb{R}}[C^{\infty }(M)]$ and for
any $f,g\in $ $C^{\infty }(M)$, we get

$1.$ 
\begin{equation*}
\lbrack \widetilde{ad^{A}}(x^{A})](f^{A})=([\widetilde{ad}(x)](f))^{A}\text{.%
}
\end{equation*}

$2.$%
\begin{equation*}
\lbrack \widetilde{\widetilde{ad^{A}}(x^{A})}](y^{A})=([\widetilde{%
\widetilde{ad}(x)}](y))^{A}\text{.}
\end{equation*}

$3.$ 
\begin{equation*}
\mathfrak{L}_{\widetilde{ad^{A}}[\delta _{M^{A}}^{A}(f^{A})]}(g^{A})=(%
\mathfrak{L}_{\widetilde{ad}[\delta _{M}(f)]}(g))^{A}\text{.}
\end{equation*}
\end{proposition}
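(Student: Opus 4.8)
The plan is to establish the three formulas in order, reducing (2) and (3) to (1), and to prove (1) by expanding $x$ in terms of the differentials $\delta_{M}(k)$ and appealing to the defining relations of the prolonged operators. Throughout I would use that $f\mapsto f^{A}$ is an algebra homomorphism, that prolongation is compatible with the module structure so that $(h\cdot\delta_{M}(k))^{A}=h^{A}\cdot\delta_{M^{A}}^{A}(k^{A})$, and the identities $\delta_{M^{A}}^{A}(f^{A})=[\delta_{M}(f)]^{A}$, $ad^{A}(f^{A})=[ad(f)]^{A}$ and $\theta^{A}(f^{A})=[\theta(f)]^{A}$ established above. For (1), first I would write $x=\sum_{i}h_{i}\cdot\delta_{M}(k_{i})$ with $h_{i},k_{i}\in C^{\infty}(M)$, so that $x^{A}=\sum_{i}h_{i}^{A}\cdot\delta_{M^{A}}^{A}(k_{i}^{A})$. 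Since $\widetilde{ad^{A}}$ is $C^{\infty}(M^{A},A)$-linear and satisfies $\widetilde{ad^{A}}\circ\delta_{M^{A}}^{A}=ad^{A}$, this gives $\widetilde{ad^{A}}(x^{A})=\sum_{i}h_{i}^{A}\cdot ad^{A}(k_{i}^{A})=\sum_{i}h_{i}^{A}\cdot[ad(k_{i})]^{A}$. Evaluating the prolonged derivation $[ad(k_{i})]^{A}$ on $f^{A}$ and using $\theta^{A}(f^{A})=[\theta(f)]^{A}$ with $\theta=ad(k_{i})$ yields $[ad(k_{i})]^{A}(f^{A})=[\{k_{i},f\}_{M}]^{A}$, whence $[\widetilde{ad^{A}}(x^{A})](f^{A})=\bigl(\sum_{i}h_{i}\cdot\{k_{i},f\}_{M}\bigr)^{A}$. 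On the base, $\widetilde{ad}(x)=\sum_{i}h_{i}\cdot ad(k_{i})$, so $[\widetilde{ad}(x)](f)=\sum_{i}h_{i}\cdot\{k_{i},f\}_{M}$, and the two sides coincide after prolongation.

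For (2), I would set $D:=\widetilde{ad^{A}}(x^{A})\in Der_{A}[C^{\infty}(M^{A},A)]$ and recall that $\widetilde{D}$ is the unique $C^{\infty}(M^{A},A)$-linear contraction with $\widetilde{D}\circ\delta_{M^{A}}^{A}=D$. Writing $y=\sum_{j}p_{j}\cdot\delta_{M}(q_{j})$, hence $y^{A}=\sum_{j}p_{j}^{A}\cdot\delta_{M^{A}}^{A}(q_{j}^{A})$, linearity gives $\widetilde{D}(y^{A})=\sum_{j}p_{j}^{A}\cdot D(q_{j}^{A})$, and each term $D(q_{j}^{A})=[\widetilde{ad^{A}}(x^{A})](q_{j}^{A})$ is exactly the quantity computed in (1), equal to $([\widetilde{ad}(x)](q_{j}))^{A}$. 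Since the base side $[\widetilde{\widetilde{ad}(x)}](y)$ unfolds in the same way, the identity follows at once from (1).

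For (3), the key observation is that the contraction $i_{D}$ annihilates degree-zero elements, so that $\mathfrak{L}_{D}(g^{A})=i_{D}[\delta_{M^{A}}^{A}(g^{A})]=\widetilde{D}[\delta_{M^{A}}^{A}(g^{A})]=D(g^{A})$; the same reduction on $M$ gives $\mathfrak{L}_{\widetilde{ad}[\delta_{M}(f)]}(g)=[\widetilde{ad}(\delta_{M}(f))](g)$. Taking $D=\widetilde{ad^{A}}[\delta_{M^{A}}^{A}(f^{A})]$ and using $\delta_{M^{A}}^{A}(f^{A})=[\delta_{M}(f)]^{A}$, I would apply (1) with $x=\delta_{M}(f)$ to obtain $D(g^{A})=([\widetilde{ad}(\delta_{M}(f))](g))^{A}$, which is precisely the prolongation of the base expression.

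I expect the only delicate points to be bookkeeping rather than genuine obstacles: one must ensure that prolongation commutes with the $C^{\infty}(M)$-module structure on $\Omega_{\mathbb{R}}[C^{\infty}(M)]$, which justifies the term-by-term expansion of $x^{A}$, and one must keep careful track of the nested tildes in (2), where $\widetilde{\widetilde{ad^{A}}(x^{A})}$ requires first passing from a one-form to a derivation and then back to a one-form by contraction. Each individual step is a direct appeal to a defining universal property or to an identity proved above, so no estimate or convergence issue arises, and (1) is the only computation to be carried out in full.
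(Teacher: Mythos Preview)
Your proposal is correct and follows essentially the same approach as the paper: expand $x$ (resp.\ $y$) as a $C^{\infty}(M)$-linear combination of exact differentials, use the $C^{\infty}(M^{A},A)$-linearity of $\widetilde{ad^{A}}$ together with $\widetilde{ad^{A}}\circ\delta_{M^{A}}^{A}=ad^{A}$ and $ad^{A}(k^{A})=[ad(k)]^{A}$, and then push everything down to $M$ via $\theta^{A}(f^{A})=[\theta(f)]^{A}$. The paper's proof in fact omits part~(3) altogether; your reduction of $\mathfrak{L}_{D}$ on degree-zero elements to $D$ itself, followed by an appeal to (1) with $x=\delta_{M}(f)$, fills that gap cleanly.
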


\begin{proof}
$1.$ For any $x\in \Omega _{\mathbb{R}}[C^{\infty }(M)],x^{A}=g^{A}\cdot
\delta _{M^{A}}^{A}(h^{A})$ with $g$ and $h$ in $C^{\infty }(M)$, and for
any $f,g\in $ $C^{\infty }(M)$, we have 
\begin{eqnarray*}
\lbrack \widetilde{ad^{A}}(x^{A})](f^{A}) &=&[\widetilde{ad^{A}}(g^{A}\cdot
\delta _{M^{A}}^{A}(h^{A}))](f^{A}) \\
&=&[g^{A}\cdot \widetilde{ad^{A}}\circ \delta _{M^{A}}^{A}(h^{A}))](f^{A}) \\
&=&[g^{A}\cdot ad^{A}(h^{A})](f^{A}) \\
&=&g^{A}\cdot \lbrack ad(h)]^{A}(f^{A}) \\
&=&(g\cdot \lbrack ad(h)](f))^{A} \\
&=&(g\cdot \widetilde{ad}[\delta _{M}(h)](f))^{A} \\
&=&(\widetilde{ad}[g\cdot \delta _{M}(h)](f))^{A} \\
&=&(\widetilde{ad}[g\cdot \delta _{M}(h)](f))^{A} \\
&=&([\widetilde{ad}(x)](f))^{A}\text{.}
\end{eqnarray*}

$2.$ When $y\in \Omega _{\mathbb{R}}[C^{\infty }(M)],y^{A}=g^{A}\cdot \delta
_{M^{A}}^{A}(h^{A})$ with $g$ and $h$ in $C^{\infty }(M)$%
\begin{eqnarray*}
\lbrack \widetilde{\widetilde{ad^{A}}(x^{A})}](y^{A}) &=&[\widetilde{%
\widetilde{ad^{A}}(x^{A})}](g^{A}\cdot \delta _{M^{A}}^{A}(h^{A})) \\
&=&g^{A}\cdot ([\widetilde{\widetilde{ad^{A}}(x^{A})}]\circ \delta
_{M^{A}}^{A})(h^{A}) \\
&=&g^{A}\cdot \lbrack \widetilde{ad^{A}}(x^{A})](h^{A}) \\
&=&g^{A}\cdot \lbrack \widetilde{ad^{A}}(x^{A})](h^{A}) \\
&=&(g\cdot \lbrack \widetilde{ad}(x)](h))^{A} \\
&=&(g\cdot \lbrack \widetilde{\widetilde{ad}(x)}]\circ \delta _{M})(h))^{A}
\\
&=&([\widetilde{\widetilde{ad}(x)}](g\cdot \delta _{M}(h)))^{A} \\
&=&([\widetilde{\widetilde{ad}(x)}](y))^{A}\text{.}
\end{eqnarray*}
\end{proof}

\begin{proposition}
When $(M,\omega _{M})$ is a Poisson manifold of Poisson $2$-form $\omega
_{M} $, then for any $X,Y\in \Omega _{A}[C^{\infty }(M^{A},A)]$ and for any $%
\varphi ,\psi \in C^{\infty }(M^{A},A)$, we get

1. 
\begin{equation*}
\lbrack \widetilde{ad^{A}}(X)](\varphi )=-\omega _{M^{A}}^{A}(X,\delta
_{M^{A}}^{A}(\varphi ))\text{;}
\end{equation*}

2.%
\begin{equation*}
\lbrack \widetilde{\widetilde{ad^{A}}(X)}](Y)=-\omega _{M^{A}}^{A}(X,Y)\text{%
;}
\end{equation*}

3. 
\begin{equation*}
\mathfrak{L}_{\widetilde{ad^{A}}[\delta _{M^{A}}^{A}(\varphi )]}\delta
_{M^{A}}^{A}(\psi )=\delta _{M^{A}}^{A}(\{\varphi ,\psi \}_{M^{A}})\text{.}
\end{equation*}
\end{proposition}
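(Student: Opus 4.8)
The plan is to derive all three identities formally from the defining relations already established, with no new computation on generators. The common engine is the canonical isomorphism $\sigma _{M^{A}}$ together with its characterizing property $\sigma _{M^{A}}(\Psi )=\Psi \circ \delta _{M^{A}}^{A}$, and the fact that $\omega _{M^{A}}^{A}$ was defined by $\omega _{M^{A}}^{A}(X,Y)=-[\sigma _{M^{A}}^{-1}\circ \widetilde{ad^{A}}(X)](Y)$.

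For part 1, I would specialize this definition to $Y=\delta _{M^{A}}^{A}(\varphi )$. Writing $\Psi =\sigma _{M^{A}}^{-1}(\widetilde{ad^{A}}(X))$, the isomorphism gives $\Psi \circ \delta _{M^{A}}^{A}=\widetilde{ad^{A}}(X)$, so evaluating $\Psi $ on $\delta _{M^{A}}^{A}(\varphi )$ returns exactly $[\widetilde{ad^{A}}(X)](\varphi )$. Hence $[\widetilde{ad^{A}}(X)](\varphi )=-\omega _{M^{A}}^{A}(X,\delta _{M^{A}}^{A}(\varphi ))$, which is the claim.

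For part 2, the key observation is that the double-tilde $\widetilde{\widetilde{ad^{A}}(X)}$ is nothing other than $\sigma _{M^{A}}^{-1}(\widetilde{ad^{A}}(X))$. Indeed, for the derivation $D=\widetilde{ad^{A}}(X)\in Der_{A}[C^{\infty }(M^{A},A)]$, the associated $C^{\infty }(M^{A},A)$-linear form $\widetilde{D}$ is by construction the unique one with $\widetilde{D}\circ \delta _{M^{A}}^{A}=D$, i.e. precisely $\sigma _{M^{A}}^{-1}(D)$. Substituting this identification into the defining formula for $\omega _{M^{A}}^{A}$ yields part 2 immediately. Then for part 3, I would first use $\widetilde{ad^{A}}\circ \delta _{M^{A}}^{A}=ad^{A}$ to rewrite the generating derivation of the Lie derivative as $\widetilde{ad^{A}}[\delta _{M^{A}}^{A}(\varphi )]=ad^{A}(\varphi )$. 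Applying the third identity of the earlier Lie-derivative proposition, $\mathfrak{L}_{D}[\delta _{M^{A}}^{A}(\psi )]=\delta _{M^{A}}^{A}[D(\psi )]$, with $D=ad^{A}(\varphi )$ collapses the left-hand side to $\delta _{M^{A}}^{A}[(ad^{A}(\varphi ))(\psi )]$. It remains to identify $(ad^{A}(\varphi ))(\psi )$ with the bracket: invoking part 1 with $X=\delta _{M^{A}}^{A}(\varphi )$ gives $(ad^{A}(\varphi ))(\psi )=[\widetilde{ad^{A}}(\delta _{M^{A}}^{A}(\varphi ))](\psi )=-\omega _{M^{A}}^{A}(\delta _{M^{A}}^{A}(\varphi ),\delta _{M^{A}}^{A}(\psi ))=\{\varphi ,\psi \}_{M^{A}}$, which finishes the argument.

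None of these steps presents a genuine analytic difficulty; the entire content is careful bookkeeping of the several ``tilde'' operations and of $\sigma _{M^{A}}^{\pm 1}$. The one step I regard as the crux is the identification in part 2 of the double-tilde notation with $\sigma _{M^{A}}^{-1}$, because once it is recognized the three parts lock together: part 3 feeds on part 1, and both rest on the same defining formula for $\omega _{M^{A}}^{A}$ together with the universal-property relation $\widetilde{ad^{A}}\circ \delta _{M^{A}}^{A}=ad^{A}$.
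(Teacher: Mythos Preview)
Your argument is correct, and it is genuinely cleaner than the paper's. The paper proves all three parts by expanding $X=\sum_{i}\varphi _{i}\cdot \delta _{M^{A}}^{A}(\varphi _{i}^{\prime })$ and $Y=\sum_{j}\psi _{j}\cdot \delta _{M^{A}}^{A}(\psi _{j}^{\prime })$ on generators, then pushing the computation through $\widetilde{ad^{A}}\circ \delta _{M^{A}}^{A}=ad^{A}$, the identity $[ad^{A}(\psi )](\varphi )=\{\psi ,\varphi \}_{M^{A}}$, and the bilinearity of $\omega _{M^{A}}^{A}$; part 2 is then reduced to part 1 via the expansion of $Y$. By contrast you bypass the generator expansions entirely: you observe that for any derivation $D$ the universal-property map $\widetilde{D}$ is literally $\sigma _{M^{A}}^{-1}(D)$, so the defining formula $\omega _{M^{A}}^{A}(X,Y)=-[\sigma _{M^{A}}^{-1}\circ \widetilde{ad^{A}}(X)](Y)$ \emph{is} part 2, and part 1 is its specialization to $Y=\delta _{M^{A}}^{A}(\varphi )$. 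Part 3 is handled the same way in both proofs (via $\mathfrak{L}_{D}[\delta _{M^{A}}^{A}(\psi )]=\delta _{M^{A}}^{A}[D(\psi )]$), though you close the loop by invoking part 1 to identify $(ad^{A}(\varphi ))(\psi )$ with $\{\varphi ,\psi \}_{M^{A}}$, whereas the paper uses that identity directly. What your approach buys is that it makes transparent that parts 1 and 2 are tautological reformulations of the definition of $\omega _{M^{A}}^{A}$ once the tilde/$\sigma _{M^{A}}^{-1}$ identification is recognized; the paper's generator computation obscures this and is really re-deriving the isomorphism $\sigma _{M^{A}}$ on the fly.
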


\begin{proof}
When $X$ and $Y\in \Omega _{A}[C^{\infty }(M^{A},A)],$ $X=\sum\limits_{i\in
I:fini}\varphi _{i}\cdot \delta _{M^{A}}^{A}(\varphi _{i}^{\prime }),$ $%
Y=\sum\limits_{j\in J:fini}\psi _{j}\cdot \delta _{M^{A}}^{A}(\psi
_{i}^{\prime })$ with $\varphi _{i},\varphi _{i}^{\prime },\psi _{j},\psi
_{j}^{\prime }\in C^{\infty }(M^{A},A)$

1. 
\begin{eqnarray*}
\lbrack \widetilde{ad^{A}}(X)](\varphi ) &=&[\widetilde{ad^{A}}%
(\sum\limits_{i\in I:fini}\varphi _{i}\cdot \delta _{M^{A}}^{A}(\varphi
_{i}^{\prime }))](\varphi ) \\
&=&\sum\limits_{i\in I:fini}\varphi _{i}\cdot (\widetilde{ad^{A}}[\delta
_{M^{A}}^{A}(\varphi _{i}^{\prime })])(\varphi ) \\
&=&\sum\limits_{i\in I:fini}\varphi _{i}\cdot \lbrack ad^{A}(\varphi
_{i}^{\prime })](\varphi ) \\
&=&\sum\limits_{i\in I:fini}\varphi _{i}\cdot \{\varphi _{i}^{\prime
},\varphi \}_{M^{A}} \\
&=&-\sum\limits_{i\in I:fini}\varphi _{i}\cdot \omega _{M^{A}}^{A}(\delta
_{M^{A}}^{A}(\varphi _{i}^{\prime }),\delta _{M^{A}}^{A}(\varphi )) \\
&=&-\omega _{M^{A}}^{A}(\sum\limits_{i\in I:fini}\varphi _{i}\cdot \delta
_{M^{A}}^{A}(\varphi _{i}^{\prime }),\delta _{M^{A}}^{A}(\varphi )) \\
&=&-\omega _{M^{A}}^{A}(X,\delta _{M^{A}}^{A}(\varphi ))\text{.}
\end{eqnarray*}

2. 
\begin{eqnarray*}
\lbrack \widetilde{\widetilde{ad^{A}}(X)}](Y) &=&[\widetilde{\widetilde{%
ad^{A}}(X)}](\sum\limits_{j\in J:fini}\psi _{j}\cdot \delta
_{M^{A}}^{A}(\psi _{j}^{\prime })) \\
&=&\sum\limits_{j\in J:fini}\psi _{j}\cdot \lbrack \widetilde{\widetilde{%
ad^{A}}(X)}](\delta _{M^{A}}^{A}(\psi _{j}^{\prime })) \\
&=&\sum\limits_{j\in J:fini}\psi _{j}\cdot ([\widetilde{\widetilde{ad^{A}}(X)%
}]\circ \delta _{M^{A}}^{A})(\psi _{j}^{\prime })) \\
&=&\sum\limits_{j\in J:fini}\psi _{j}\cdot \lbrack \widetilde{ad^{A}}%
(X)](\psi _{j}^{\prime }) \\
&=&-\sum\limits_{j\in J:fini}\psi _{j}\cdot \omega _{M^{A}}^{A}(X,\delta
_{M^{A}}^{A}(\psi _{j}^{\prime })) \\
&=&-\omega _{M^{A}}^{A}(X,\sum\limits_{j\in J:fini}\psi _{j}\cdot \delta
_{M^{A}}^{A}(\psi _{j}^{\prime })) \\
&=&-\omega _{M^{A}}^{A}(X,Y)\text{;}
\end{eqnarray*}

3.%
\begin{eqnarray*}
\mathfrak{L}_{\widetilde{ad^{A}}[\delta _{M^{A}}^{A}(\varphi )]}\delta
_{M^{A}}^{A}(\psi ) &=&\mathfrak{L}_{ad^{A}(\varphi )}\delta
_{M^{A}}^{A}(\psi ) \\
&=&\delta _{M^{A}}^{A}[ad^{A}(\varphi )(\psi )] \\
&=&\delta _{M^{A}}^{A}(\{\varphi ,\psi \}_{M^{A}})\text{.}
\end{eqnarray*}
\end{proof}

\end{document}